\pdfoutput=1
\documentclass{amsart}
\setlength{\oddsidemargin}{0.3in} \setlength{\evensidemargin}{0.3in} \setlength{\textwidth}{6.2in}

\usepackage{amsmath}
\usepackage{amsthm}
\usepackage{amsfonts}
\usepackage{amssymb}
\usepackage{amscd}
\usepackage{mathtools}
\usepackage{multirow}
\usepackage{relsize}
\usepackage{algorithm}
\usepackage[noend]{algpseudocode}
\usepackage{enumitem}

\usepackage{graphicx}
\usepackage{color} 
\usepackage{transparent}

\theoremstyle{plain}
\newtheorem{theorem}{Theorem}[section]

\newtheorem{lemma}[theorem]{Lemma}
\newtheorem{proposition}[theorem]{Proposition}
\newtheorem{corollary}[theorem]{Corollary}
\newtheorem{conjecture}[theorem]{Conjecture}
\theoremstyle{definition}
\newtheorem{definition}[theorem]{Definition}

\makeatletter
\newcommand{\newreptheorem}[2]{\newtheorem*{rep@#1}{\rep@title}\newenvironment{‌​rep#1}[1]{\def\rep@title{#2 \ref*{##1}}\begin{rep@#1}}{\end{rep@#1}}}
\makeatother

\numberwithin{equation}{section}
\numberwithin{figure}{section}


\newcommand{\inv}{^{-1}}


\begin{document}

\title{The Determinant and Volume of 2-Bridge Links and Alternating 3-Braids}
\author{Stephan D. Burton}
\thanks{Supported by NSF Grants DMS-1105843 and DMS-1404754.}

\maketitle

\begin{abstract}
We examine the conjecture, due to Champanerkar, Kofman, and Purcell \cite{CKP} that $\vol[K] < 2 \pi \log \det (K)$ for alternating hyperbolic links, where $\vol[K] = \vol[S^3\backslash K]$ is the hyperbolic volume and $\det(K)$ is the determinant of $K$. We prove that the conjecture holds for 2-bridge links, alternating 3-braids, and various other infinite families. We show the conjecture holds for highly twisted links and quantify this by showing the conjecture holds when the crossing number of $K$ exceeds some function of the twist number of $K$.
\end{abstract}


\section{Introduction}
A major goal in the study of knots is to relate combinatorial and topological properties of knots to the hyperbolic geometry of knots. In this paper, we explore the relationship between the hyperbolic volume $\vol[K] = \vol[S^3 \backslash K]$ of an alternating hyperbolic knot and its determinant $\det(K)$.

Dunfield noted a relationship between the volume and determinant of a knot in an online post \cite{DunfieldOnline}. He observed that there is a nearly linear relationship between the hyperbolic volume of an alternating knot and $\log(J(-1))$ where $J$ denotes the Jones polynomial. After further study of this relationship and some experimentation, Champanerkar, Kofman and Purcell \cite{CKP} made the following conjecture.

\begin{conjecture}\label{conj:det_vol}
Let $K$ be a hyperbolic alternating knot. Then $\emph{vol}(K) < 2 \pi \log \det(K)$.
\end{conjecture}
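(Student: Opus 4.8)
The plan is to estimate the two sides of the inequality separately in terms of the combinatorics of a reduced alternating diagram $D$ of $K$, with crossing number $c = c(K)$ and twist number $t = t(K)$, and then compare. For the determinant I would use the classical identification with a spanning-tree count: writing $G$ for one of the two Tait (checkerboard) graphs of $D$, one has $\det(K) = \tau(G)$, the number of spanning trees of $G$. For the volume I would invoke the linear upper bound of Lackenby and its refinements, $\mathrm{vol}(S^3 \setminus K) < 10\, v_3\,(t-1)$, where $v_3$ is the volume of the regular ideal tetrahedron; for the explicit families below I would instead use the sharper octahedral estimates coming from Gu\'eritaud's ideal triangulations. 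With the volume controlled by a constant times $t$, the whole problem collapses to producing a lower bound for $\log \tau(G)$ that beats that constant.

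First I would settle the two named families, where both quantities are available in closed form. For a $2$-bridge link $\mathfrak b(p,q)$ presented by a continued-fraction expansion $[a_1, \dots, a_n]$ with all $a_i \ge 1$, the determinant is exactly the continuant, so $\det(K) = p$, and the continuant recursion gives $p \ge \prod_i a_i$ and, more usefully, $p \ge \varphi^{\,n-1}$, where $\varphi$ is the golden ratio. Since Gu\'eritaud's decomposition bounds the volume by a constant times the number $n$ of continued-fraction terms rather than by $c = \sum_i a_i$, comparing $\log p$ with $n$ proves the conjecture outright once the twist regions are large on average, and the finitely many exceptional shapes are checked by hand. For alternating $3$-braids I would run the same template, replacing the continuant by the transfer-matrix (Chebyshev-type) recursion that evaluates $\det$ on the standard $3$-braid diagram and pairing it with the matching explicit volume estimate.

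For the general highly twisted statement the mechanism is that a twist region with $a_i$ crossings enters $\tau(G)$ as a continuant factor: expanding the weighted matrix-tree determinant after collapsing each twist region to a weighted edge yields $\log \det(K) \ge (t-1)\log \varphi + \log(\max_i a_i)$, up to a controlled error. Here the first term is the universal Fibonacci contribution of the $t$ twist regions and the second reflects the largest region. Because the volume is at most $10\,v_3\,(t-1)$, which is linear in $t$, fixing $t$ and letting $c \to \infty$ forces $\max_i a_i \ge c/t$ to grow without bound, so $\log\det(K)$ exceeds $\mathrm{vol}(S^3\setminus K)/(2\pi)$ as soon as $c$ surpasses an (exponential) function of $t$ — which is exactly the promised statement that the conjecture holds whenever $c(K)$ exceeds some function of $t(K)$.

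The main obstacle is the opposite regime, $c \approx t$, in which every twist region carries only one or two crossings. There the second term above essentially vanishes and only the Fibonacci term survives, giving $\log\det(K) \gtrsim (\log\varphi)\, t \approx 0.48\,t$; but the available volume bound is $\approx 10.15\,t$, and $2\pi \cdot 0.48 \approx 3.0$ falls well short of $10.15$. Closing this gap — either by sharpening the volume bound per twist region toward its true (much smaller) geometric value, or by showing that short twist regions interact in the Tait graph so as to push $\tau(G)$ well beyond the naive Fibonacci product — is the crux, and is precisely why the unconditional families must be handled by their sharper, closed-form computations rather than by the general estimate.
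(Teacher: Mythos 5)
You are reviewing a statement that is a \emph{conjecture}: neither you nor the paper proves it in general. The paper only establishes it for special families (2-bridge links, alternating 3-braids, a family of 4-braids, and highly twisted links), and your proposal has the same overall shape: closed-form families plus a ``crossing number large compared to twist number'' criterion, with an honest admission that the regime $c \approx t$ is out of reach. Your highly twisted argument is essentially Theorem \ref{thm:high_twist} of the paper (there the determinant bound is the additive $\det(K) \geq 2\gamma^{t-1} + c - t$, obtained from contraction--deletion, Lemma \ref{lem:ContractionDeletion}, together with Stoimenow's bound, Theorem \ref{thm:Stoimenow}, rather than your multiplicative $(t-1)\log\varphi + \log\max_i a_i$, but the conclusion is the same kind of statement).

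The genuine gap is in the two families you claim to settle outright. For 2-bridge links your plan is: $\det(K)$ equals the continuant, hence $\det(K) \geq \varphi^{\,n-1}$; the volume is at most $Cn$ for a constant $C$ per continued-fraction term; and ``finitely many exceptional shapes are checked by hand.'' But every known twist-number volume bound has $C$ too large for this comparison: Lackenby's bound (Theorem \ref{thm:LackenbyBound}) gives $C \approx 10.15$, and the Futer--Kalfagianni--Purcell Montesinos bound (Theorem \ref{thm:montesinos}) gives $C = 2v_8 \approx 7.33$, while $2\pi\log\varphi \approx 3.02$. Consequently the exceptional set is not finite: for $R(1,1,\dots,1)$ the comparison reads $3.02\,n$ against $7.33\,n$, and for $R(2,2,\dots,2)$ it reads $2\pi\log(1+\sqrt{2})\,n \approx 5.54\,n$ against $7.33\,n$; infinitely many 2-bridge links (all $a_i \in \{1,2\}$) escape your argument, and these are exactly the hard regime you yourself flagged. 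The missing idea --- and the engine of the paper's proofs for 2-bridge links and 3-braids --- is Adams' bipyramid bound (Theorem \ref{thm:AdamsUpperBound} and Corollary \ref{cor:AdamsVolumeBound}): for $K = R(a_1,\dots,a_n)$ it yields $\mathrm{vol}(K) < 2\pi\log\prod_i \frac{a_i+2}{2}$, a \emph{face-based} bound whose cost per twist region is $2\pi\log\frac{a_i+2}{2}$, i.e.\ only about $2.55$ for a single-crossing region and about $4.36$ for a two-crossing region --- in both cases below the corresponding growth rate of the continuant. With that volume bound in hand, the paper's remaining work (Lemmas \ref{lem:reductionfactor}, \ref{lem:decomposition}, \ref{lem:sixcases}) is purely combinatorial: proving $\det R(a_1,\dots,a_n) \geq \prod_i \frac{a_i+2}{2}$ apart from a few genuine exceptions such as $R(1,a_2,1)$, by cutting the sequence into short blocks and analyzing the recursion; the 3-braid case (Theorem \ref{thm:3braids}) is then reduced to the 2-bridge case by contraction--deletion on the checkerboard graph. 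So your framework is repairable, but only after replacing the twist-number volume bounds by the bipyramid bound; as written, the 2-bridge and 3-braid cases do not go through.
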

One can use the data from Knotscape  \cite{Knotscape} and SnapPy \cite{SnapPy} to verify this conjecture for all alternating knots with up to 16 crossings. Champanerkar, Kofman and Purcell in \cite{CKP} computationally verified Conjecture \ref{conj:det_vol} for many examples of an infinite family of links known as weaving knots. Using these weaving knots, they showed that the constant $2\pi$ is sharp, in the sense that given $\alpha < 2 \pi$ there exists an alternating link $K$ with $\alpha \log(\det(K)) < \vol[K]$. 

Stoimenow \cite{StoimenowGraphsDeterminants} also explored the relationship between volume and determinant, and showed that if $K$ is a non-trivial, non-split, alternating hyperbolic link then \begin{equation}
\det(K) \geq 2(1.0355)^{\vol[K]}
\end{equation} 
He further demonstrated that there exist constants $C_1, C_2 > 0$ such that for any hyperbolic link $K$
\begin{equation}
\det(K) \leq \left(\dfrac{C_1 c(K)}{\vol[K]} \right)^{C_2 \vol[K]}
\end{equation}
where $c(K)$ denotes the crossing number of $K$.

In this paper, we will verify that Conjecture \ref{conj:det_vol} holds for various infinite families of knots including 2-bridge knots and 3-braids. To obtain upper bounds on the volumes of knots we largely rely on work of Adams \cite{AdamsBipyramids} who gave an upper bound in terms of volumes of bipyramids. Adams \textit{et al.} \cite{AdamsEtAl} used this upper bound to study the volume densities of 2-bridge knots. Other useful upper bounds in the case of highly twisted knots are due to Lackenby, Agol and Thurston \cite{LackenbyBound} and Futer, Kalfagianni and Purcell \cite{Guts}.

To study the determinant of knots, we will count the number of spanning trees of a graph associated to the checkerboard coloring of a diagram of the knot. We rely on a recurrence equation due to Kauffman \cite{KauffmanRational} as well as two well-known combinatorial theorems for counting spanning trees. In the case of highly twisted knots, we utilize work of Stoimenow \cite{StoimenowGraphsDeterminants} who provided a lower bound on the number of spanning trees in certain graphs.

An outline of the paper is as follows. In Section \ref{sec:background}, we outline the technology used in this paper to estimate volumes and determinants of knots. In Section \ref{sec:two_bridge}, we will prove Conjecture \ref{conj:det_vol} for 2-bridge links. In Section \ref{sec:3_braids}, we will prove the conjecture for alternating 3-braids and an infinite family of 4-braids. We discuss a general result about highly twisted links in Section \ref{sec:pretzel} and include an application to alternating pretzel links.

\textbf{Acknowledgements:} The author would like to thank his adviser, Efstratia Kalfagianni, for suggesting to study Conjecture \ref{conj:det_vol}, and for helpful comments. The author is also thankful for helpful conversations with David Futer.

\section{Background}\label{sec:background}
In this section we discuss the relevant theorems that will be used in the rest of the paper. We begin with a discussion of how one may find upper bounds on volumes of alternating hyperbolic links, and conclude with results on how one may calculate the determinant.

\subsection{Hyperbolic Volumes}
Adams in \cite{AdamsBipyramids} developed a method for finding an upper bound of the volume of an alternating hyperbolic link given an alternating diagram of the link. We will recall his notation and results. 

\begin{definition}
The regular ideal $n$-bipyramid can be formed as follows. Begin with $n$ ideal tetrahedra each having dihedral angles $\frac{2\pi}{n}, \frac{(n-2)\pi}{2n}, \frac{(n-2)\pi}{2n}$. Let $e$ be an edge running from a point in $\partial \mathbb{H}^3$ to  $\infty$. Glue an edge of each ideal tetrahedron with dihedral angle $2\pi/n$ to the edge $e$. The resulting polyhedron is called a \textit{regular ideal $n$-bipyramid} and will be denoted by $B_n$.
\end{definition}
\noindent An example of a regular ideal $n$-bipyramid is shown in Figure \ref{fig:bipyramid}. The volume of $B_n$ is given by
\begin{equation}
\vol[B_n] = n \left( \int_0^{\frac{2\pi}{n}} -\ln|2\sin(\theta)|\, d \theta + 2 \int_0^{\frac{\pi(n-2)}{2n}} -\ln|2\sin(\theta)|\, d\theta \right).
\end{equation}

\begin{figure}
\includegraphics[scale=.5]{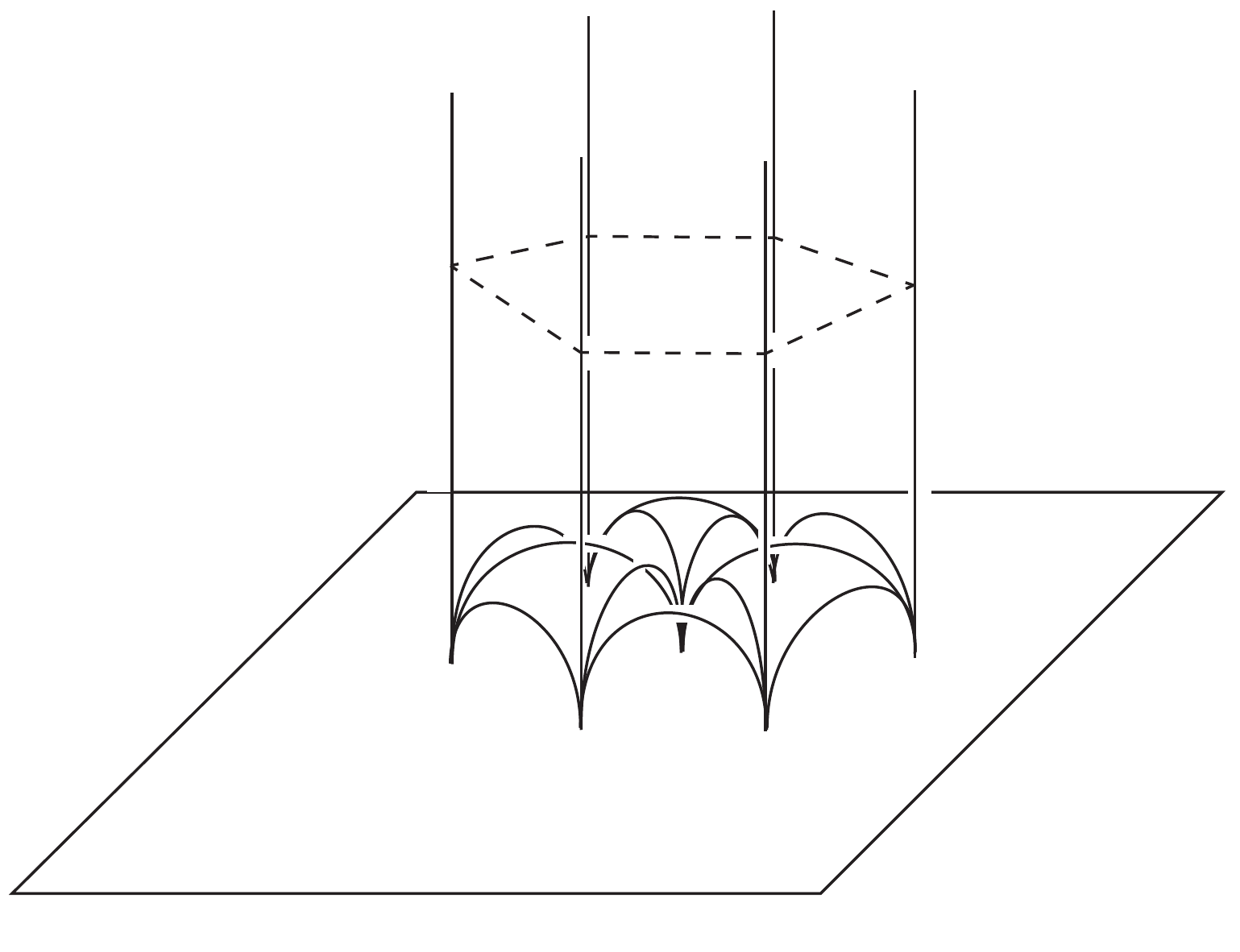}
\caption{A regular ideal 6-bipyramid. Figure taken from \cite{AdamsBipyramids}.} \label{fig:bipyramid}
\end{figure}

\noindent Adams \cite{AdamsBipyramids} proved the following theorem about the volumes of regular ideal $n$-bipyramids.

\begin{theorem}[\cite{AdamsBipyramids}]\label{thm:BipyramidVolume}
The volume of a regular ideal $n$-bipyramid satisfies the inequality $$\emph{vol}(B_n) < 2 \pi \log\left(\dfrac{n}{2}\right).$$ Moreover, this inequality is asymptotically sharp.
\end{theorem}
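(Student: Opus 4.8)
The plan is to pass to the Lobachevsky function $\Lambda(\theta) = -\int_0^\theta \log|2\sin t|\,dt$ (with $\log$ the natural logarithm, as in the volume formula) and to use its duplication identity to collapse the two-integral expression into a single term. In this notation the stated formula reads
\[
\operatorname{vol}(B_n) = n\left(\Lambda\!\left(\tfrac{2\pi}{n}\right) + 2\,\Lambda\!\left(\tfrac{\pi(n-2)}{2n}\right)\right),
\]
and writing $\tfrac{\pi(n-2)}{2n} = \tfrac{\pi}{2} - \tfrac{\pi}{n}$ I would apply the identity $\Lambda(2\theta) = 2\Lambda(\theta) - 2\Lambda\!\left(\tfrac{\pi}{2}-\theta\right)$ at $\theta = \pi/n$. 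This identity follows by differentiation from $2\sin 2\theta = (2\sin\theta)\,(2\sin(\tfrac{\pi}{2}-\theta))$, together with the normalizations $\Lambda(0) = \Lambda(\tfrac{\pi}{2}) = 0$. The bracket then telescopes to $2\Lambda(\pi/n)$, yielding the clean closed form
\[
\operatorname{vol}(B_n) = 2n\,\Lambda\!\left(\tfrac{\pi}{n}\right).
\]
Producing this simplification is the crux of the argument; everything afterward is single-variable calculus.

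Given the closed form, dividing the desired inequality by the positive factor $2\pi/\theta$ with $\theta = \pi/n$ turns $\operatorname{vol}(B_n) < 2\pi\log(n/2)$ into the elementary claim that the function
\[
f(\theta) = \theta\,\log\!\left(\tfrac{\pi}{2\theta}\right) - \Lambda(\theta)
\]
is positive on $(0,\tfrac{\pi}{3}\,]$ (the range of $\theta = \pi/n$ for integers $n \ge 3$). I would study $f$ by differentiation: since $\Lambda'(\theta) = -\log(2\sin\theta)$, a direct computation gives $f'(\theta) = \log\!\big(\tfrac{\pi\sin\theta}{e\,\theta}\big)$, which is strictly decreasing on $(0,\pi)$ because $\sin\theta/\theta$ is. As $f'(0^+) = \log(\pi/e) > 0$ while $f'(\tfrac{\pi}{3}) < 0$, the function $f$ increases and then decreases on $(0,\tfrac{\pi}{3}\,]$, starting from $\lim_{\theta\to 0^+} f(\theta) = 0$. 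Hence $f$ is automatically positive on its increasing portion, while on the decreasing portion $f(\theta) \ge f(\tfrac{\pi}{3})$; the entire inequality therefore reduces to the single evaluation $f(\tfrac{\pi}{3}) = \tfrac{\pi}{3}\log\tfrac{3}{2} - \Lambda(\tfrac{\pi}{3}) > 0$, which holds because $\tfrac{\pi}{3}\log\tfrac{3}{2} \approx 0.425$ exceeds $\Lambda(\tfrac{\pi}{3}) \approx 0.338$. This establishes the strict inequality for every integer $n \ge 3$.

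For the asymptotic sharpness I would insert the small-angle expansion $\Lambda(\theta) = \theta - \theta\log(2\theta) + O(\theta^3)$ into the closed form, obtaining $\operatorname{vol}(B_n) = 2\pi\log n + 2\pi(1 - \log(2\pi)) + o(1)$. Since $2\pi\log(n/2) = 2\pi\log n - 2\pi\log 2$, both expressions share the leading term $2\pi\log n$, so $\operatorname{vol}(B_n)/(2\pi\log(n/2)) \to 1$; in fact their difference tends to the positive constant $2\pi(\log\pi - 1)$, which simultaneously confirms strictness for large $n$ and shows the leading coefficient $2\pi$ cannot be lowered.

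The main obstacle, as I see it, is not any delicate estimate but locating the duplication identity that yields $\operatorname{vol}(B_n) = 2n\,\Lambda(\pi/n)$. Without it one must bound the two integrals separately, and the term $\Lambda(2\pi/n)$ is genuinely awkward for the smallest values $n = 3, 4$, where $2\pi/n$ is not small --- precisely the regime in which the bound is tightest. The simplification removes this difficulty and reduces the whole theorem to the monotonicity of $\sin\theta/\theta$ and the single boundary evaluation at $\theta = \tfrac{\pi}{3}$.
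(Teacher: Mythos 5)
Your proposal is correct, but there is nothing in the paper to compare it against: Theorem \ref{thm:BipyramidVolume} is imported verbatim from Adams \cite{AdamsBipyramids}, and the paper offers no proof of it. So your argument is necessarily an independent one, and it holds up. The duplication identity $\Lambda(2\theta)=2\Lambda(\theta)-2\Lambda\left(\tfrac{\pi}{2}-\theta\right)$ is the standard $n=2$ multiplication formula for the Lobachevsky function, and it does collapse the paper's two-integral expression to $\operatorname{vol}(B_n)=2n\Lambda(\pi/n)$; as a sanity check, $n=3$ gives $6\Lambda(\pi/3)=2v_4$ (two regular ideal tetrahedra) and $n=4$ gives $8\Lambda(\pi/4)=v_8$ (the regular ideal octahedron), matching the constants used elsewhere in the paper. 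The calculus is also right: $f'(\theta)=\log\left(\tfrac{\pi\sin\theta}{e\theta}\right)$ is strictly decreasing, positive at $0^+$ and negative at $\pi/3$, so with $f(0^+)=0$ the positivity of $f$ on $(0,\pi/3]$ reduces to the single check $f(\pi/3)>0$; and your expansion correctly gives $2\pi\log(n/2)-\operatorname{vol}(B_n)\to 2\pi(\log\pi-1)>0$, which is exactly the ratio-sharpness the theorem asserts. Two small points would tighten it. First, $\Lambda(\pi/2)=0$ is not a free normalization but the classical evaluation $\int_0^{\pi/2}\log(2\sin t)\,dt=0$ (equivalently, oddness plus $\pi$-periodicity of $\Lambda$); your duplication identity needs it to fix the constant of integration, so state or cite it as such. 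Second, the endpoint check can be made numerics-free using your own identity at $\theta=\pi/6$: it gives $3\Lambda(\pi/3)=2\Lambda(\pi/6)$, while concavity of $\sin$ on $[0,\pi/6]$ gives $2\sin t\ge 6t/\pi$ and hence $\Lambda(\pi/6)\le\pi/6$, so $\Lambda(\pi/3)\le\pi/9<\tfrac{\pi}{3}\log\tfrac{3}{2}$ (the last inequality being $e<27/8$). What your route buys, compared with the paper's bare citation, is a self-contained proof together with the explicit closed form $2n\Lambda(\pi/n)$ and the exact limiting gap $2\pi(\log\pi-1)$, neither of which the paper records.
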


Adams used regular ideal $n-$bipryamids to give an upper bound on the volume of hyperbolic, alternating links. The following directly follows from \cite[Theorem 4.1]{AdamsBipyramids}.

\begin{theorem}\label{thm:AdamsUpperBound}
Let $K$ be a hyperbolic link with a reduced alternating projection $D$. Let $b_n$ be the number of faces of $D$ having $n$ edges. Suppose that there are two distinct faces of $D$ having respectively $r$ and $s$ edges. Then
\begin{equation}
\emph{vol}(K) \leq  - \emph{vol}(B_r) - \emph{vol}(B_s) + \sum b_n \emph{vol}(B_n)
\end{equation}
\end{theorem}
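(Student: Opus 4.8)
The plan is to derive the inequality directly from Adams's bipyramid decomposition of an alternating link complement \cite{AdamsBipyramids}, so the real work splits into three parts: recalling the decomposition, replacing its pieces by regular ideal bipyramids to pass to an upper bound on volume, and keeping track of the two faces that absorb the apex vertices. I would view the reduced alternating diagram $D$ on the projection sphere $S^2 \subset S^3$, regard each crossing as an ideal vertex, and introduce two apex points, one on each side of $S^2$. Coning each region of $D$ having $n$ edges to the two apexes produces a combinatorial $n$-bipyramid, and these bipyramids glue along their triangular side faces to recover $S^3 \setminus K$ once the crossings are drilled out. Reducedness of $D$ is what guarantees that each of these coned regions is a genuine (nondegenerate) bipyramid.

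The two apex points are honest finite points of $S^3$, not punctures of $S^3\setminus K$. To make them ideal I would place each apex in the interior of a chosen region of $D$; the two regions receiving the apexes then contribute pyramids that collapse into the cusp structure rather than full bipyramids, so they drop out of the decomposition. Calling $r$ and $s$ the numbers of edges of these two regions, the remaining pieces form a topological ideal polyhedral decomposition of $S^3\setminus K$ into bipyramids, with exactly $b_n$ pieces of type $n$ except for the two omitted faces of type $r$ and $s$.

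Next comes the volume comparison. Straightening each piece of this decomposition with respect to the complete hyperbolic metric on $S^3\setminus K$ (which exists since $K$ is hyperbolic) realizes $\mathrm{vol}(K)$ as a sum of volumes of geodesic ideal bipyramids of the prescribed combinatorial types. Since the regular ideal bipyramid is volume-maximizing among ideal bipyramids of a fixed type, each straightened piece of type $n$ has volume at most $\mathrm{vol}(B_n)$, and hence
\begin{equation}
\mathrm{vol}(K) \;\leq\; \sum_n b_n\,\mathrm{vol}(B_n) - \mathrm{vol}(B_r) - \mathrm{vol}(B_s),
\end{equation}
which is the assertion of \cite[Theorem 4.1]{AdamsBipyramids}. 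Because the two apexes may be dropped into \emph{any} two distinct regions of $D$, the subtracted faces can be taken to be any pair of distinct faces, and recording their edge counts as $r$ and $s$ yields the stated bound; to optimize one would place the apexes in the two regions of largest degree, but the inequality holds for every admissible choice.

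The main obstacle is the volume comparison itself, namely justifying that straightening the regular-bipyramid decomposition to the honest hyperbolic metric cannot increase volume. This rests on two inputs: the maximality of the regular ideal bipyramid among ideal bipyramids of its type, and the correct treatment of the decomposition near the two finite apex vertices, both of which are precisely what \cite[Theorem 4.1]{AdamsBipyramids} supplies. Given that result, the only additional step carried out here is the observation that the choice of the two exceptional faces is free, which is immediate from the symmetry of the construction in the regions of $D$.
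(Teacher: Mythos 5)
Your proposal is correct and takes essentially the same route as the paper: the paper offers no independent proof of this statement, asserting only that it ``directly follows from'' Theorem 4.1 of \cite{AdamsBipyramids}, and your argument likewise defers the substantive content---the bipyramid decomposition, the straightening/volume-maximization step, and the handling of the two apex vertices---to that same theorem of Adams. Your additional sketch of the construction and the remark that the two omitted faces may be chosen freely simply unpack what the citation supplies, so there is no gap to repair.
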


\noindent Combining Theorems \ref{thm:BipyramidVolume} and \ref{thm:AdamsUpperBound} we get the following corollary.

\begin{corollary}\label{cor:AdamsVolumeBound}
Let $K$ be a hyperbolic knot having an alternating projection $D$. Let $b_n$ be the number of faces of $D$ having $n$ edges. Suppose that there are two distinct faces of $D$ having respectively $r$ and $s$ edges. Then
\begin{equation}
\emph{vol}(K) < 2 \pi \log \left( \frac{\prod n^{b_n}}{2^m} \frac{4}{rs}\right)
\end{equation}
\end{corollary}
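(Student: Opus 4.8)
The plan is to combine Theorem \ref{thm:AdamsUpperBound} with Theorem \ref{thm:BipyramidVolume}, but the combination cannot be a naive term-by-term substitution. The reason is that Theorem \ref{thm:BipyramidVolume} gives an \emph{upper} bound on each $\text{vol}(B_n)$, whereas two of the terms in Theorem \ref{thm:AdamsUpperBound}, namely $-\text{vol}(B_r)$ and $-\text{vol}(B_s)$, enter with a negative sign; applying the upper bound directly to these would reverse the inequality. The essential first step is therefore to regroup the right-hand side of Theorem \ref{thm:AdamsUpperBound} so that every bipyramid volume appears with a \emph{nonnegative} coefficient. Starting from
$$\text{vol}(K) \le \sum_n b_n\, \text{vol}(B_n) - \text{vol}(B_r) - \text{vol}(B_s),$$
I would absorb the two subtracted terms back into the sum. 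Since the two distinguished faces are genuine faces of $D$, we have $b_r \ge 1$ and $b_s \ge 1$ (and $b_r \ge 2$ in the case $r = s$, where the two distinct faces have the same number of edges), so after absorption the coefficient of $\text{vol}(B_r)$ becomes $b_r - 1 \ge 0$, that of $\text{vol}(B_s)$ becomes $b_s - 1 \ge 0$, and every other coefficient $b_n$ is already nonnegative.

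With all coefficients nonnegative, I would then apply the strict bound $\text{vol}(B_n) < 2\pi \log(n/2)$ of Theorem \ref{thm:BipyramidVolume} to each term and sum. Because the total weight of the regrouped expression is $\sum_n b_n - 2 = m - 2 \ge 1$ (where $m = \sum_n b_n$ is the number of faces of $D$, so that $m = c(K) + 2 \ge 5$ for a hyperbolic alternating diagram), at least one strictly positive coefficient survives, and multiplying strict inequalities by nonnegative weights and adding yields the \emph{strict} inequality
$$\text{vol}(K) < \sum_n b_n \cdot 2\pi \log\!\left(\frac{n}{2}\right) - 2\pi \log\!\left(\frac{r}{2}\right) - 2\pi \log\!\left(\frac{s}{2}\right).$$

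Finally I would collapse the right-hand side into a single logarithm. Using $\sum_n b_n \log(n/2) = \log\bigl(\prod_n n^{b_n}/2^m\bigr)$ together with $-\log(r/2) - \log(s/2) = \log\bigl(4/(rs)\bigr)$, the bound rearranges to the claimed form $\text{vol}(K) < 2\pi \log\bigl(\frac{\prod_n n^{b_n}}{2^m}\frac{4}{rs}\bigr)$.

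The main obstacle is precisely the sign bookkeeping in the first step: one must check that moving the two negative terms into the sum leaves every coefficient nonnegative, and that the resulting total weight is strictly positive, since this is exactly what promotes the final inequality from non-strict to strict. Everything after the regrouping is routine manipulation of logarithms.
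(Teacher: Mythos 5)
Your proposal is correct and follows essentially the same route as the paper's proof: rewrite the bound of Theorem \ref{thm:AdamsUpperBound} as $(b_r-1)\operatorname{vol}(B_r) + (b_s-1)\operatorname{vol}(B_s) + \sum_{n \neq r,s} b_n \operatorname{vol}(B_n)$, apply the strict bound of Theorem \ref{thm:BipyramidVolume} to each nonnegatively weighted term, and collapse the logarithms. Your additional care about why the inequality stays strict (at least one coefficient is positive since $m \geq 5$) is a point the paper glosses over, but it is the same argument.
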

\begin{proof}
This is a straightforward calculation obtained by inserting the inequality of Theorem \ref{thm:BipyramidVolume} into Theorem \ref{thm:AdamsUpperBound}.
\begin{align*}
\vol[K] &\leq - \vol[B_r] - \vol[B_s] + \sum b_n \vol[B_n]\\
& = (b_r-1)\vol[B_r] + (b_s-1)\vol[B_s]  + \sum_{n \neq r, s} b_n \vol[B_n]\\
& < 2 \pi \left[ (b_r-1)\log\left(\dfrac{r}{2}\right) + (b_s-1) \log\left( \dfrac{s}{2}\right) + \sum_{n \neq r, s} b_n \log\left(\dfrac{n}{2}\right) \right]\\
& = 2 \pi \log \left(\dfrac{r^{b_r-1}}{2^{b_r-1}}\dfrac{s^{b_s-1}}{2^{b_s-1}}\prod_{n \neq r, s} \dfrac{n^{b_n}}{2^{b_n}}\right)\\
& = 2 \pi \log \left( \frac{\prod n^{b_n}}{2^m} \frac{4}{rs}\right)
\end{align*}
\end{proof}

The volume bound of Corollary \ref{cor:AdamsVolumeBound} is insufficient in certain cases involving links with a large number of crossings in a twist region. To handle this case, we appeal to the following theorem of Lackenby, Agol, and D. Thurston \cite{LackenbyBound}. First we recall some terminology from \cite{LackenbyBound}. 
\begin{definition}
A \textit{twist region} of a diagram $D$ is either a connected collection of bigon regions of $D$ arranged in a row, which is maximal in the sense that it is not part of a longer row of bigons, or a single crossing adjacent to no bigon regions. The \textit{twist number} of a diagram $D$ is the number of twist regions in the diagram. A diagram is \textit{twist reduced} whenever a simple closed curve in the diagram intersects the link projection
transversely in four points disjoint from the crossings, and two of these points are adjacent to some crossing, and the remaining two points are adjacent to some other crossing, then this curve bounds a subdiagram that consists of a (possibly empty) collection of bigons arranged in a row between these two crossings.
\end{definition}

\begin{theorem}[\cite{LackenbyBound}]\label{thm:LackenbyBound}
Let $K$ be an alternating hyperbolic link with $t$ twist regions in a prime alternating diagram. Then $$\emph{vol}(K) < 10 v_4(t-1)$$ where $v_4 \approx 1.01494$ is the volume of a regular ideal tetrahedron.
\end{theorem}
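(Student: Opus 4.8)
The plan is to bound the volume from above by the number of ideal tetrahedra in a triangulation of the complement, combined with the fact that hyperbolic volume strictly decreases under Dehn filling. The governing principle is the Gromov--Thurston straightening inequality: if a cusped hyperbolic $3$-manifold $M$ admits a topological ideal triangulation into $N$ tetrahedra, then $\operatorname{vol}(M) \le N\, v_4$, because straightening carries each ideal simplex to a geodesic one of volume at most $v_4$, the regular ideal tetrahedron being the unique maximizer; equivalently the relative Gromov norm of $M$ is at most $N$ and $\operatorname{vol}(M)=v_4\,\lVert M,\partial M\rVert$. The problem therefore reduces to exhibiting, for some auxiliary manifold dominating the volume of $S^3\setminus K$, an ideal triangulation with at most $10(t-1)$ tetrahedra.

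The difficulty is that a long twist region forces many crossings, so a triangulation built directly from a diagram $D$ would use a number of tetrahedra growing with the crossing number rather than with the twist number $t$. To remove this dependence, I would first pass to the augmented link $\hat K$ obtained by encircling each twist region of $D$ with a crossing circle and then deleting the full twists, retaining at most a single crossing per region to preserve the checkerboard structure. One recovers $S^3\setminus K$ from $S^3\setminus\hat K$ by Dehn filling the crossing circles along the slopes that reintroduce the twists, so Thurston's hyperbolic Dehn surgery theorem gives the strict inequality $\operatorname{vol}(S^3\setminus K) < \operatorname{vol}(S^3\setminus\hat K)$. The advantage is that the combinatorics of $\hat K$, and hence the size of any natural triangulation of its complement, now depend only on $t$.

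Next I would decompose $S^3\setminus\hat K$ into ideal polyhedra. Slicing along the projection sphere and the twice-punctured crossing disks produces finitely many ideal polyhedra, the analogue in the fully augmented setting of Menasco's polyhedral decomposition of an alternating diagram, whose total face and vertex counts are controlled linearly by $t$. Triangulating each polyhedron and tallying the resulting tetrahedra yields the count $N \le 10(t-1)$, where the subtracted $1$ reflects the region at infinity, exactly as two distinguished faces were discarded in Corollary \ref{cor:AdamsVolumeBound}. Applying the Gromov--Thurston inequality to $\hat K$ together with Dehn filling monotonicity then gives $\operatorname{vol}(S^3\setminus K) < \operatorname{vol}(S^3\setminus\hat K) \le 10(t-1)\,v_4$.

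The main obstacle is the combinatorial accounting in this last step: pinning down the constant $10$ and the precise factor $(t-1)$ requires a uniform bound on how many tetrahedra each augmented twist region contributes, valid across the local configurations that arise — a single-crossing region versus a long chain of bigons, and the way adjacent regions share faces. I would also need to dispose of the hyperbolicity hypotheses at the outset, using primeness and twist-reducedness of $D$ to guarantee that $\hat K$ is hyperbolic and that the polyhedral decomposition applies, and to handle the finitely many low-complexity diagrams, where $t$ is too small for the augmentation to be hyperbolic, by direct inspection.
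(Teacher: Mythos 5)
The paper itself does not prove this statement: Theorem \ref{thm:LackenbyBound} is imported verbatim from \cite{LackenbyBound}, where the upper bound is established by Agol and D.~Thurston in the appendix of that paper. So the only meaningful comparison is with that source, and your outline does reproduce its strategy faithfully: fully augment the diagram, observe that $S^3\setminus K$ is recovered from the augmented link complement by Dehn filling the crossing circles, invoke Thurston's strict volume decrease under hyperbolic Dehn filling, and bound the volume of the augmented link complement by $v_4$ times the number of tetrahedra in a topological ideal triangulation via straightening. That machinery is standard and correctly deployed.

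The genuine gap is exactly the step you defer as ``the main obstacle'': exhibiting an ideal triangulation (or polyhedral decomposition) of the fully augmented link complement with at most $10(t-1)$ tetrahedra. This is not residual bookkeeping; it is the entire quantitative content of the theorem. Without it, your argument only shows $\mathrm{vol}(K) < C\, v_4\, t$ for some unspecified universal constant $C$ --- and indeed Lackenby's own argument in the body of \cite{LackenbyBound} already gives such a bound with a larger constant; the point of the Agol--Thurston appendix is the explicit decomposition achieving $10$ and $(t-1)$. Concretely, one cuts the flat augmented link complement along the projection plane and the crossing disks into two isometric ideal polyhedra, controls the numbers of ideal vertices and faces by an Euler characteristic count on the flat diagram (this is where primeness and twist-reducedness enter), and subdivides so that each twist region beyond the first is charged a bounded number of tetrahedra. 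Your appeal to an analogy with Corollary \ref{cor:AdamsVolumeBound} (``the subtracted $1$ reflects the region at infinity'') carries no force: there the correction came from discarding two bipyramid volumes, whereas here both the constant $10$ and the factor $(t-1)$ must emerge from a tally you have not performed. Two smaller loose ends should also be closed: hyperbolicity of the augmented link, which both the Dehn filling step and the straightening step require, must be justified from the diagrammatic hypotheses; and since the theorem assumes only a prime diagram, you must note that twist reduction never increases $t$, so it is harmless to assume it.
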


\noindent The upper bound of Theorem \ref{thm:LackenbyBound} can be improved in the case of Montesinos links using work of Futer, Kalfagianni, and Purcell.
\begin{theorem}[{\cite{Guts}}]\label{thm:montesinos}
Let $K$ be a hyperbolic Montesinos link. Then $\emph{vol}(K) < 2 v_8 t$ where $t$ is the number of twists in some diagram of $K$ and $v_8 \approx 3.66386237$ is the volume of a regular ideal hyperbolic octahedron.
\end{theorem}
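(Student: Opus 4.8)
The plan is to bound the volume of $K$ by that of a fully augmented link extracted from a Montesinos diagram, and then to estimate the latter geometrically by decomposing its complement into ideal octahedra. I would begin with a Montesinos diagram $D$ of $K$ whose twist regions realize the count $t$, built from the rational tangles in the standard way. Form the augmented link $J$ by encircling each twist region with an unknotted crossing circle and then removing all full twists from inside that region, so that $J$ is a fully augmented link in the sense of Agol--Thurston and Purcell. The original link complement $S^3 \setminus K$ is then recovered from $S^3 \setminus J$ by Dehn filling each crossing-circle cusp along the slope that records the number of removed full twists (with at most a single half-twist reinserted per region, which does not change the homeomorphism type of the complement). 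Since $K$ is hyperbolic by hypothesis, Thurston's hyperbolic Dehn filling theorem yields the strict monotonicity $\operatorname{vol}(K) < \operatorname{vol}(J)$, so it suffices to prove $\operatorname{vol}(J) \le 2 v_8\, t$.

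The second step is the polyhedral decomposition of the augmented complement, following the method in the appendix to \cite{LackenbyBound}. Cutting $S^3 \setminus J$ along its two checkerboard surfaces decomposes it into a pair of identical right-angled ideal polyhedra whose combinatorics are dual to the diagram of $J$; each crossing circle produces one ideal vertex of the polyhedron. The crucial estimate is that, because all dihedral angles are right angles, each such polyhedron subdivides so that every twist region of $D$ contributes at most one regular ideal octahedron. Summing over the two polyhedra and over the $t$ twist regions gives $\operatorname{vol}(J) \le 2 v_8\, t$, and combining with the first step yields $\operatorname{vol}(K) < \operatorname{vol}(J) \le 2 v_8\, t$, as claimed. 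For the sharper Montesinos-specific accounting one would exploit the rational tangle structure directly, matching each tangle slot to a bounded, explicitly describable collection of octahedral pieces.

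I expect the main obstacle to be the combinatorial bookkeeping in the octahedral count, namely verifying that the right-angled ideal polyhedron associated to the augmented Montesinos link genuinely decomposes into no more than one octahedron per twist region per side, with careful attention to twist regions consisting of a single crossing and to the cyclic gluing pattern of the tangles (where adjacent tangles share faces and one must avoid double-counting). A secondary technical point is ensuring the Dehn filling hypothesis applies cleanly: one must confirm that the augmented link $J$ is itself hyperbolic and that the filling slopes are genuine, so that the strict inequality from Thurston's theorem is available rather than an equality in degenerate configurations. Once these two points are settled, the inequality $\operatorname{vol}(K) < 2 v_8\, t$ follows immediately from the chain of estimates above.
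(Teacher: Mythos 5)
You should first be aware that the paper contains no proof of Theorem \ref{thm:montesinos}: it is imported verbatim from \cite{Guts}, with only the remark that the hypothesis there (at least three positive and three negative tangles) is needed only for the lower bound. So your proposal has to be measured against the proof in \cite{Guts}, and against the known constraints on such bounds. Your first step is fine: augmenting every twist region yields a fully augmented link $J$, the complement $S^3\setminus K$ is recovered by Dehn filling the crossing-circle cusps of $S^3\setminus J$, and Thurston's theorem gives the strict inequality $\operatorname{vol}(K)<\operatorname{vol}(J)$ once $J$ is known to be hyperbolic.

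The genuine gap is your second step, the claim that $\operatorname{vol}(J)\le 2v_8\,t$ because ``every twist region contributes at most one regular ideal octahedron'' to each of the two right-angled ideal polyhedra. No statement of this kind is true for fully augmented links in general, so it cannot follow from the right-angled decomposition alone. Indeed, the Agol--Thurston bound $10v_4(t-1)$ of Theorem \ref{thm:LackenbyBound} is asymptotically sharp, and the sharpness examples in the appendix to \cite{LackenbyBound} are precisely (fillings of) augmented links whose underlying circle packing approaches the hexagonal packing: their volumes per twist region tend to $10v_4\approx 10.15$, which exceeds $2v_8\approx 7.33$; equivalently, the volume of each right-angled polyhedron per crossing circle can approach $5v_4>v_8$. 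Hence the per-twist-region octahedral count you assert is simply false for generic fully augmented links, and any correct argument must use the Montesinos structure --- the cyclic arrangement of rational tangles, which forces the nerve of the associated circle packing to be far from hexagonal --- in an essential, quantitative way. That is exactly the content you defer to ``combinatorial bookkeeping'' in your final paragraph, so the chain $\operatorname{vol}(K)<\operatorname{vol}(J)\le 2v_8\,t$ breaks at its second inequality; this is also why the proof in \cite{Guts} is not a generic augmentation count but an argument tailored to (augmented) Montesinos diagrams. Two smaller inaccuracies, worth fixing but not fatal: the polyhedral decomposition of a fully augmented link comes from cutting along the projection plane and the crossing disks, not along the checkerboard surfaces of $D$; and each crossing circle contributes two shaded triangular faces to each polyhedron, not a single ideal vertex.
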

Note that while the statement of Theorem \ref{thm:montesinos} in \cite{Guts} requires the link to have at least three positive tangles and at least three negative tangles, this condition was only necessary to prove the lower bound stated in that theorem.

\subsection{Determinants of Links}
The determinant of a link $K$ is defined by $\det(K) = |\Delta_K(-1)|$ where $\Delta_K(t)$ is the Alexander polynomial. It is well-known when $K$ is alternating, the determinant is equal to the number of spanning trees of any of the checkerboard graphs for $K$ (see for example \cite[Lemma 3.14]{StoimenowGraphsDeterminants}). Recall that the checkerboard graphs for $K$ are constructed as follows. Take a reduced, alternating diagram $D$ of $K$ and then checkerboard color $D$. Create a graph $G$ by having one vertex per shaded region of the checkerboard coloring of $D$, and connect vertices with one edge per crossing of $D$ connecting the corresponding shaded regions. See Figure \ref{fig:Checkerboard} for an example. \\

We recall two methods that one may use to compute the number of spanning trees of a graph. First we present the Matrix Tree Theorem proved by Kirchoff in 1847. One can find a modern proof in \cite{MatrixTreeTheorem}.

\begin{figure}
\includegraphics[scale=1]{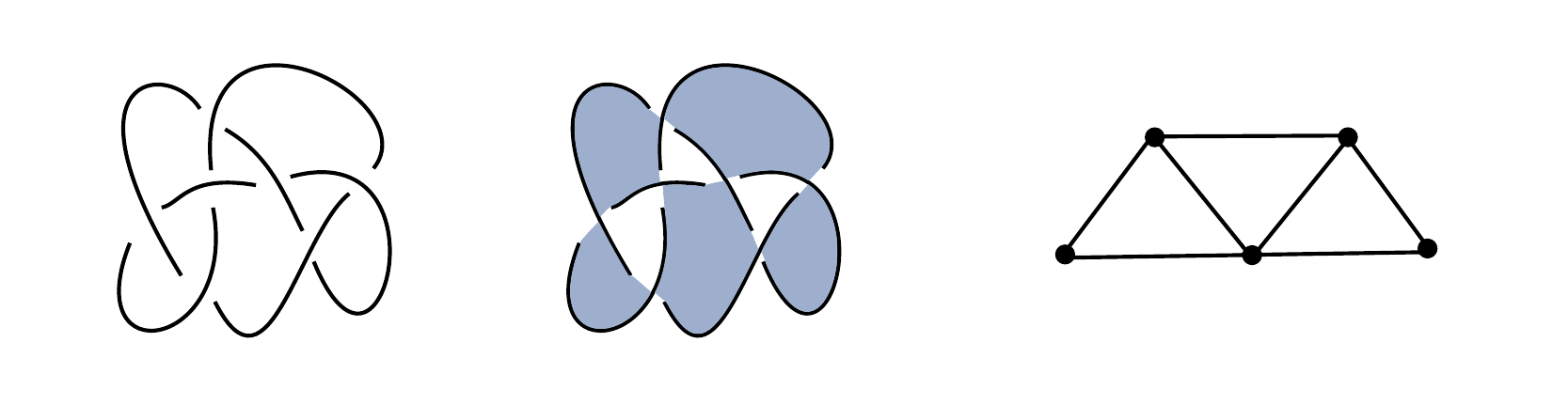}
\caption{\textsc{Left}: The $7_7$ knot. \textsc{Center}: Checkerboard coloring. \textsc{Right}: The checkerboard graph associated to the shaded faces.}\label{fig:Checkerboard}
\end{figure}

\begin{theorem}[Matrix Tree Theorem]\label{thm:MatrixTree}
Let $G$ be a graph and let $v_1, \hdots, v_n$ be the vertices of $G$. Let $\alpha(i,j)$ be the number of edges with endpoints on both of the vertices $v_i$ and $v_j$. Define $L$ to be the matrix (known as the Laplacian) where the $(i,j)$ entry $\ell_{ij}$ of $L$ is given by
\begin{equation}
\ell_{ij} = \left\{ \begin{array}{cc} \deg(v_i)-2\alpha(i,i) & \text{ if } j = i\\
-\alpha(i,j) & \text{ if } j \neq i \end{array} \right.
\end{equation}
Then $\tau(G)$ is given by the determinant of any of the $(n-1) \times (n-1)$ minors of $L$.
\end{theorem}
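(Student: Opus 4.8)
The plan is to realize the Laplacian as a product of incidence matrices and then invoke the Cauchy--Binet formula. First I would discard all loops of $G$ (they lie in no spanning tree, and the diagonal entry $\deg(v_i)-2\alpha(i,i)$ already subtracts their contribution) and fix an arbitrary orientation of each remaining edge. Letting $m$ be the number of edges, let $B$ be the $n \times m$ signed incidence matrix with
\[
B_{ik} = \begin{cases} +1 & \text{if } v_i \text{ is the head of } e_k,\\ -1 & \text{if } v_i \text{ is the tail of } e_k,\\ 0 & \text{otherwise.} \end{cases}
\]
A direct computation of the $(i,j)$ entry $\sum_k B_{ik}B_{jk}$ of $BB^T$ shows that the diagonal entries count the non-loop edges at $v_i$, namely $\deg(v_i)-2\alpha(i,i)$, while the off-diagonal entries equal $-\alpha(i,j)$; hence $L = BB^T$. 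This computation holds verbatim for multigraphs, which is essential since checkerboard graphs typically have parallel edges.

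Next I would pass to a minor. Deleting the row of $L$ and the row of $B$ corresponding to a fixed vertex $v_n$ yields the reduced incidence matrix $B_0$ of size $(n-1)\times m$, and the associated principal minor of $L$ is exactly $\det(B_0 B_0^T)$. The Cauchy--Binet formula then gives
\[
\det(B_0 B_0^T) = \sum_{S} \big(\det B_0[S]\big)^2,
\]
where $S$ ranges over all $(n-1)$-element subsets of the edge set and $B_0[S]$ is the corresponding $(n-1)\times(n-1)$ square submatrix. The proof thus reduces to the combinatorial claim that $\det B_0[S] = \pm 1$ when the edges indexed by $S$ form a spanning tree of $G$, and $\det B_0[S] = 0$ otherwise.

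To establish the claim I would argue as follows. If the $n-1$ edges in $S$ do not form a spanning tree, then they cannot form a forest (a forest on $n$ vertices with $n-1$ edges is automatically a spanning tree), so they contain a cycle; summing the corresponding columns of $B_0$ with the signs dictated by traversing that cycle yields zero, so the columns are dependent and $\det B_0[S]=0$. If instead $S$ is a spanning tree, I would repeatedly prune a leaf distinct from the deleted vertex $v_n$: each such leaf gives a row of $B_0[S]$ with a single nonzero $\pm 1$ entry, and Laplace expansion along that row reduces the determinant to that of a smaller tree times $\pm 1$. Iterating, $\det B_0[S] = \pm 1$. Substituting into Cauchy--Binet, each spanning tree contributes $(\pm 1)^2 = 1$ and every other term vanishes, so $\det(B_0 B_0^T) = \tau(G)$. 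That the answer is independent of which vertex is deleted follows from the fact that every row and column of $L$ sums to zero, which forces all cofactors of $L$ to coincide.

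The main obstacle is the combinatorial lemma identifying $\det B_0[S]$: the vanishing case requires the precise sign bookkeeping around a cycle, and the spanning-tree case requires that the leaf-pruning induction never remove $v_n$, so that the matrix stays square and the recursion closes. Everything else—the factorization $L=BB^T$, the application of Cauchy--Binet, and the cofactor-equality remark—is routine once this lemma is in hand.
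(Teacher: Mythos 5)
The paper does not prove this statement at all: it is quoted as Kirchhoff's classical Matrix Tree Theorem, with the proof deferred to a cited modern reference, so there is no internal argument to compare yours against. Your proof is the standard incidence-matrix/Cauchy--Binet argument, and it is correct and complete: the factorization $L = BB^T$ after discarding loops (and your observation that the computation survives parallel edges is genuinely relevant here, since the checkerboard graphs this paper feeds into the theorem have many parallel edges coming from bigons in twist regions), the reduction of a principal minor to $\det(B_0B_0^T)$, Cauchy--Binet, and the key lemma that $\det B_0[S]$ is $\pm 1$ when $S$ is a spanning tree (leaf-pruning induction, always pruning a leaf other than the deleted vertex) and $0$ otherwise (signed column sum around a cycle, using that $n-1$ acyclic edges on $n$ vertices must form a spanning tree). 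One caveat worth recording: as stated, the theorem asserts that \emph{any} $(n-1)\times(n-1)$ minor of $L$ equals $\tau(G)$, but your closing remark---rows and columns of $L$ sum to zero, hence all cofactors of $L$ coincide---actually shows that the $(i,j)$ minor equals $(-1)^{i+j}\tau(G)$, so off-diagonal minors agree with $\tau(G)$ only up to sign. That is an imprecision in the quoted statement rather than a gap in your argument; the paper only ever applies the theorem through a principal minor (in Lemma \ref{lem:all1}), where your proof gives exactly what is needed.
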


For the next lemma, we introduce some notation. Let $G$ be a graph and $e$ an edge of the graph. We define $G-e$ to be the graph obtained by removing the edge $e$ from $G$. We define $G/e$ to be the graph obtained by contracting the edge $e$ and identifying the endpoints of $e$ to a single vertex as shown in Figure \ref{fig:collapse}. With this notation, we recall the following well-known result.

\begin{figure}
\includegraphics[scale=1]{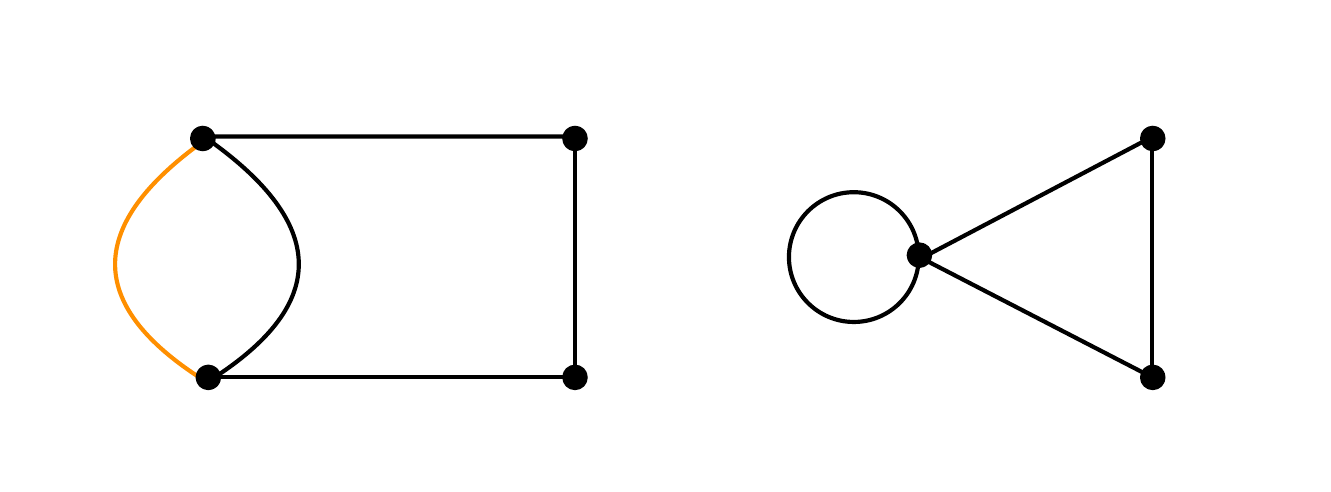}
\caption{\textsc{Left}: Initial graph. \textsc{Right}: Result of collapsing the orange edge.}\label{fig:collapse}
\end{figure}

\begin{lemma}\label{lem:ContractionDeletion}
Let $G$ be a graph. Then $\tau(G) = \tau(G-e) + \tau(G/e)$.
\end{lemma}

Using spanning trees, Stoimenow \cite{StoimenowGraphsDeterminants} was able to give a lower bound on the determinant of an alternating knot.

\begin{theorem}[{\cite[Theorem 4.3]{StoimenowGraphsDeterminants}}]\label{thm:Stoimenow}
Let $t$ be the number of twist regions in a twist-reduced alternating diagram $D$ of a link $K$. Then $$\det(K) \geq 2 \cdot \gamma^{t-1}$$ where $\gamma \approx 1.4253$ is the unique positive real number satisfying $\gamma^{-5} + 2\gamma^{-4} + \gamma^{-3} -1 = 0$.
\end{theorem}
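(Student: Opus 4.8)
The plan is to pass to the combinatorial model and argue entirely with spanning trees. Since $K$ is alternating, $\det(K) = \tau(G)$ where $G$ is either checkerboard graph of the twist-reduced diagram $D$; moreover the two checkerboard graphs are planar duals, and since $\tau(G) = \tau(G^*)$ for planar $G$ while duality interchanges series with parallel edges (equivalently, deletion with contraction), I may analyze whichever picture is convenient and move freely between them. The first task is to record how a twist region sits inside $G$: a twist region of $k$ crossings appears as a bundle of $k$ parallel edges between two vertices (or, dually, a chain of $k$ edges through degree-two vertices), and the twist-reduced hypothesis guarantees that these maximal bundles and chains are in bijection with the $t$ twist regions of $D$. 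The goal thus becomes the purely graph-theoretic inequality $\tau(G) \ge 2\gamma^{t-1}$.

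I would set this up as an induction on $t$ driven by the contraction--deletion relation of Lemma~\ref{lem:ContractionDeletion}. For a twist region realized as a parallel bundle of multiplicity $k$ between $u$ and $v$, summing the relation over the bundle gives $\tau(G) = k\,\tau(G/e) + \tau(G\setminus\text{bundle})$, where $G/e$ identifies $u$ with $v$ and $G\setminus\text{bundle}$ deletes all $k$ edges; both resulting graphs carry fewer twist regions, which is what feeds the induction. The base case $t=1$ consists of the $(2,n)$-torus links, whose checkerboard graph is a single bundle with $\det = n \ge 2 = 2\gamma^{0}$.

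The constant $\gamma$ is forced by the determinant-minimizing configuration. Writing the defining relation as $\gamma^{-5} + 2\gamma^{-4} + \gamma^{-3} = 1$, equivalently $\gamma^{5} = (\gamma+1)^2$, exhibits $\gamma$ as the dominant root of the linear recurrence $a_n = a_{n-3} + 2a_{n-4} + a_{n-5}$. I would expect the extremal family to consist of small twist regions assembled so that each contributes as little as possible to $\tau$, and the coefficients $1,2,1$ to reflect a local configuration whose resolution produces four descendants dropping the twist number by $3$, $4$, $4$, and $5$; the induction then closes because at each step the children satisfy $\sum_i \gamma^{\,t_i-t} \ge 1$, with equality exactly at this configuration.

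The main obstacle is that one cannot simply shrink every twist region to a single crossing. Doing so can make two formerly distinct twist regions parallel or in series, so that a subsequent reduction merges them and the twist count collapses; concretely, for the figure-eight knot the fully reduced graph is a pair of vertices joined by a double edge with $\tau = 2$, which is strictly smaller than $2\gamma^{\,t-1} = 2\gamma \approx 2.85 \le 5 = \det(4_1)$. Thus the induction must keep careful track of edge multiplicities and, more seriously, of the \emph{several} twist regions that a single contraction or deletion can destroy at once. The heart of the argument is therefore a finite case analysis over the possible local structures around a well-chosen edge (for instance one meeting a vertex of least degree, or a minimal twist region), verifying in every case that the induced drops $t - t_i$ satisfy $\sum_i \gamma^{\,t_i - t} \ge 1$. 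Identifying the genuinely worst configuration, and proving that the $2$-connectedness inherited from primeness forbids anything cheaper, is where essentially all the difficulty lies; the polynomial $\gamma^{-5} + 2\gamma^{-4} + \gamma^{-3} - 1$ is precisely the binding constraint that this analysis must produce.
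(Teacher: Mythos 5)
Your reduction to the graph-theoretic statement is sound: for an alternating link $\det(K)=\tau(G)$, planar duality lets you pass between parallel bundles and series chains, the bundle identity $\tau(G)=k\,\tau(G/e)+\tau(G\setminus\mathrm{bundle})$ is correct, the base case $t=1$ (the $(2,n)$-torus links) is right, and your reading of the constant --- $\gamma^{5}=(\gamma+1)^{2}$, so $\gamma$ is the dominant root of the recurrence $a_n=a_{n-3}+2a_{n-4}+a_{n-5}$ --- is exactly the right reverse-engineering. You have also put your finger on the genuine obstruction: contraction or deletion can merge formerly distinct twist regions, as your figure-eight computation (reduced graph with $\tau=2<2\gamma$) correctly illustrates. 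But that is precisely where the proposal stops. The claim you defer, namely that for every possible local configuration the descendants of the induction satisfy $\sum_i \gamma^{\,t_i-t}\ge 1$, \emph{is} the theorem: nothing in the proposal enumerates the configurations, derives the drops $t-t_i$ you guess ($3,4,4,5$ with multiplicities $1,2,1$), or proves that the $2$-connectedness coming from primeness and the twist-reduced hypothesis forbid anything cheaper. Those coefficients are inferred from the shape of the polynomial rather than extracted from the combinatorics, so the argument is circular at its core step: you assume the extremal configuration is the one that would explain the answer. As it stands this is a correct plan with the entire load-bearing case analysis left as a black box, and it cannot be accepted as a proof.

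For calibration: the paper you are working against offers no proof of this statement at all --- it is imported verbatim as Theorem 4.3 of Stoimenow's work, and the content of Stoimenow's proof is exactly the finite case analysis you postponed, namely a delicate induction on the twist number in which the worst local configuration is identified, the twist-collapse phenomenon you flagged is controlled, and the binding constraint emerges as the equation $\gamma^{-5}+2\gamma^{-4}+\gamma^{-3}=1$. Your outline points in the right direction and correctly anticipates both the mechanism and the difficulty, but to turn it into a proof you would have to carry out (or cite) that analysis in full.
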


\section{Two Bridge Links}\label{sec:two_bridge}
It is known that any 2-bridge link has an alternating projection of one of the forms shown in Figure \ref{fig:TwoBridge}. We will denote 2-bridge links by $R(a_1, a_2,\hdots,a_n)$ where the sequence $a_1, a_2,\hdots,a_n$ denotes the number of half-twists in each crossing region. Examples of $R(3,3,2)$ and $R(3,2,2,3)$ are provided in Figure \ref{fig:TwoBridge}. We begin the proof that Conjecture \ref{conj:det_vol} holds for 2-bridge links by studying the determinant of a 2-bridge link. Kauffman and Lopes \cite{KauffmanRational} gave the following recursive method of calculating the determinant of rational links.
\begin{figure}
\includegraphics[scale=.7]{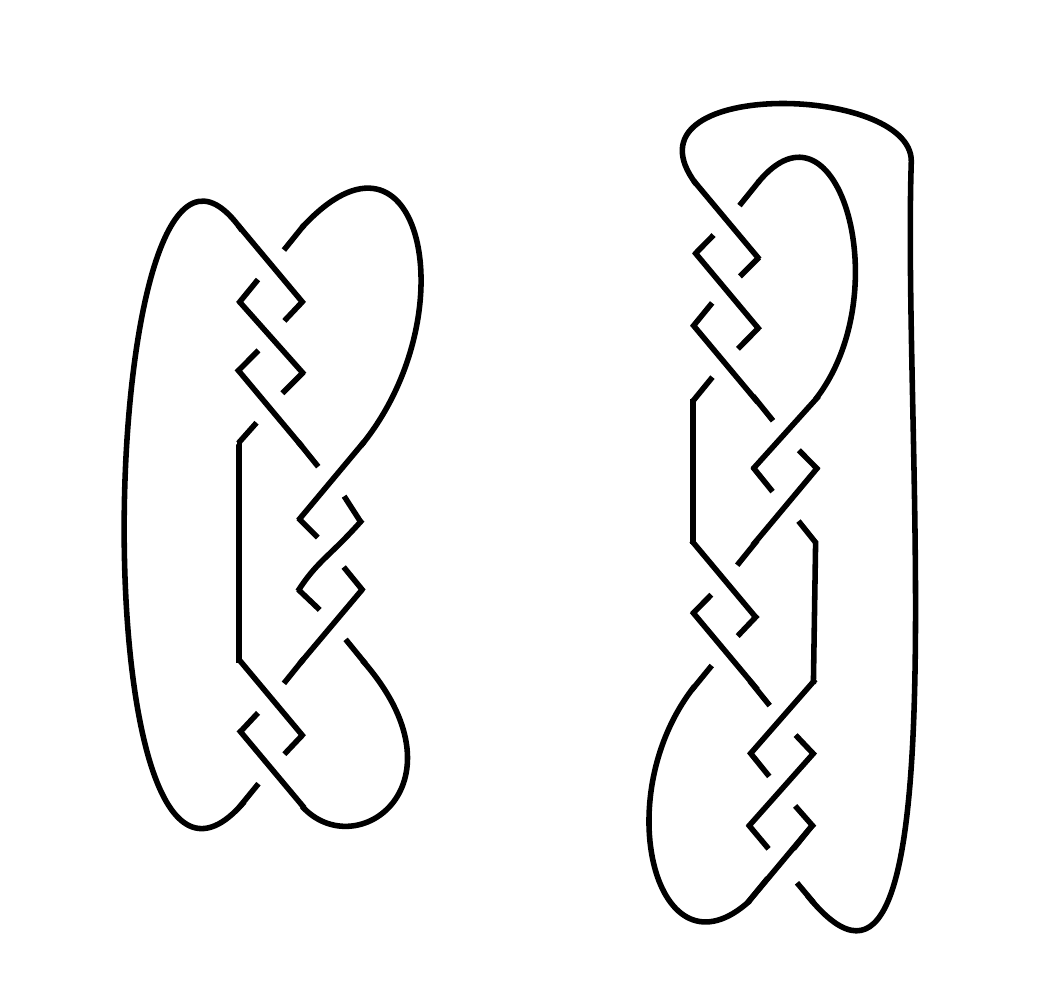}
\caption{\textsc{Left}: The knot $R(3,3,2)$. \textsc{Right}: The link $R(3,2,2,3)$.}\label{fig:TwoBridge}
\end{figure}

\begin{theorem}[\cite{KauffmanRational}]\label{thm:DetRecurrence}
Let $K = R(a_1, a_2, \hdots, a_n)$ be a two-bridge link. Then $det(K) = T(n)$ where $T(n)$ is defined by the recursion
\begin{equation}\label{eqn:detrec}
\left\{\begin{array}{ccl}
T(0) & = & 1\\
T(1) & = & a_1\\
T(k+1) & = & a_{k+1}T(k) + T(k-1)
\end{array}
\right.
\end{equation}
\end{theorem}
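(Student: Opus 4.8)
The plan is to convert the statement into a spanning-tree count and then run a two-variable induction driven by Lemma \ref{lem:ContractionDeletion}. Since $K = R(a_1,\dots,a_n)$ is alternating, its determinant equals the number $\tau(G)$ of spanning trees of either checkerboard graph $G$ of the standard diagram. I would fix one checkerboard coloring and describe the resulting graph: because the twist regions of a $2$-bridge diagram are alternately horizontal and vertical, under a single coloring the odd-indexed twist regions contribute bundles of parallel edges between two distinguished terminal vertices $s,t$, while the even-indexed twist regions contribute edges in series (or vice versa). Thus $G$ is a two-terminal series–parallel graph assembled by alternately attaching $a_i$ parallel edges and $a_i$ series edges, and $\det(K)$ is a spanning-tree count of the closure of this network.

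The next step is to introduce the pair of invariants needed to carry the induction: for a two-terminal graph $H$ with terminals $s,t$ let $\tau(H)$ be its number of spanning trees and let $\sigma(H) = \tau(H/(s{=}t))$ be the number of spanning forests with two components separating $s$ from $t$. The engine is Lemma \ref{lem:ContractionDeletion}: adding a single $st$-edge $e$ gives $\tau(H+e) = \tau(H) + \tau(H/e) = \tau(H) + \sigma(H)$ while leaving $\sigma$ unchanged, so attaching a bundle of $a$ parallel edges performs the update $\tau \mapsto \tau + a\sigma$, $\sigma \mapsto \sigma$. A dual computation, using that series edges in $H$ are parallel edges in the planar dual (which interchanges the roles of $\tau$ and $\sigma$), shows that attaching a series path of $a$ edges performs $\sigma \mapsto \sigma + a\tau$, $\tau \mapsto \tau$.

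With these two update rules the theorem reduces to bookkeeping. Starting from the $0$-tangle, whose graph is two isolated terminals with $(\tau,\sigma)=(0,1)$, I would process $a_1,\dots,a_n$ in order, alternately applying the parallel and series updates. An induction then shows that after the $k$-th step the coordinate just updated equals $T(k)$ and the other equals $T(k-1)$: at a parallel step $\tau \mapsto \tau + a_k\sigma = T(k-2) + a_k T(k-1) = T(k)$, and at a series step the same recursion holds with the roles of $\tau$ and $\sigma$ exchanged. This is exactly the recursion \eqref{eqn:detrec}. Finally I would verify that forming the numerator closure of the tangle realizes $\det(K)$ as the coordinate last updated, namely $T(n)$.

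The main obstacle is not the algebra but the geometry feeding it: correctly identifying the checkerboard graph of the standard $2$-bridge diagram, verifying that its twist regions really do alternate between parallel bundles and series paths under one coloring, and pinning down how the closure converts the terminal invariants into $\det(K)$. Once the series update rule is justified — the one step requiring care, since series composition moves the active terminal and is cleanest to argue via planar duality or a direct count of separating $2$-forests — the induction and the match with \eqref{eqn:detrec} are routine. An alternative route would be to apply the Matrix Tree Theorem \ref{thm:MatrixTree} directly: after suppressing the degree-two vertices arising from the series regions, the relevant Laplacian minor is tridiagonal, and cofactor expansion along its last row reproduces \eqref{eqn:detrec} at once.
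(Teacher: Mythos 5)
The paper offers no proof for you to match: Theorem \ref{thm:DetRecurrence} is imported wholesale from Kauffman and Lopes \cite{KauffmanRational}, so your argument can only be judged on its own merits. On those merits it is correct, and it is essentially the standard spanning-tree proof of this continued-fraction fact. Both of your update rules check out against Lemma \ref{lem:ContractionDeletion}. For a new parallel $st$-edge $e$ one has $\tau(H+e)=\tau(H)+\tau(H/e)=\tau(H)+\sigma(H)$, while $\sigma(H+e)=\sigma(H)$ because $e$ becomes a loop once $s$ and $t$ are identified. For a pendant path of $a$ edges ending at the new terminal $t'$, every spanning tree is forced to contain all $a$ path edges (its internal vertices and $t'$ have no other attachments), so $\tau$ is unchanged, whereas a two-component spanning forest separating $s$ from $t'$ omits exactly one path edge or none, giving $\sigma\mapsto\sigma+a\tau$. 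Starting from $(\tau,\sigma)=(0,1)$ for the $0$-tangle, the induction reproduces \eqref{eqn:detrec} exactly, with the coordinate updated at step $k$ equal to $T(k)$ and the other equal to $T(k-1)$.

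The two steps you flag as needing care are real, but routine. The closure bookkeeping works out as follows: when $n$ is odd (last region a parallel bundle) the checkerboard graph of the closed diagram is the two-terminal network itself, so $\det(K)=\tau$; when $n$ is even the closure identifies the two terminals, so $\det(K)=\sigma$; in both cases this is the last-updated coordinate, i.e. $T(n)$, and the two diagrams in Figure \ref{fig:TwoBridge} are adequate sanity checks. One genuine caution concerns your proposed alternative via Theorem \ref{thm:MatrixTree}: suppressing degree-two vertices does \emph{not} preserve unweighted spanning-tree counts, so that shortcut does not work ``at once''; you would need either the weighted Matrix Tree Theorem (series edges combining like resistances) or to expand the tridiagonal-style determinant with the parallel bundles left intact. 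As written, your deletion--contraction route is the one that stands, and it is a perfectly good replacement for the citation.
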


\noindent It is interesting to note that when $a_k = 1$ for all $k$, then the recursion yields the Fibonacci sequence. We now introduce some notation that will aid the exposition. Define 
\begin{equation}
V(a_1, \hdots, a_n) = \prod_{i = 1}^n \frac{(a_i + 2)}{2}
\end{equation}
Let $K = R(a_1, \hdots, a_n)$. Note that by Corollary \ref{cor:AdamsVolumeBound} we have 
\begin{equation} 
\vol[K] < 2 \pi \log\left(\frac{(a_1 + 1)(a_n + 1)}{4}\prod_{i = 2}^{n-1} \frac{(a_i + 2)}{2}\right) < 2 \pi \log(V(a_1, \hdots, a_n)) 
\end{equation}

We will obtain a lower bound on $T(n)$ from the the recurrence of Theorem \ref{thm:DetRecurrence} and then show that it exceeds $V(a_1, \hdots, a_n)$. The most problematic cases for obtaining a lower bound on $T(n)$ are when $a_i = 1$ for many values of $i$. The following lemma allows us to reduce to the case where $a_1, \hdots, a_n$ contains no long sequences of consecutive ones.

\begin{lemma}\label{lem:reductionfactor}
Let $K = R(a_1, a_2, \hdots, a_n)$ and let $T(i)$ be the recurrence (\ref{eqn:detrec}). Fix $k \geq 2$. Let $$K' = R(a_1,\hdots,a_{k-1},a_{k+m},\hdots, a_n)$$ and define another recurrence $\widehat{T}(i)$ by
\begin{equation}
\widehat{T}(i) = \left\{ 
\begin{array}{lrl} 
T(i) & \text { if } & i < k\\
a_{i+1}\widehat{T}(i-1) + \widehat{T}(i-2) & \text{ if } & i \geq k
\end{array}
\right.
\end{equation}
Then the following are true:
\begin{enumerate}[label=(\alph*)]
\item If $T(k) > \frac{3}{2}T(k-1)$ and $T(k-1) > \frac{3}{2}T(k-2)$, then $T(n) > \frac{3}{2}\widehat{T}(n-1)$ and $$\det(R(a_1, \hdots, a_n)) > \frac{3}{2} \det(R(a_1, \hdots, a_{k-1}, a_{k + 1}, \hdots, a_n))$$ \label{part:reductionfactora}
\item Suppose $k \geq 4$ and $a_{k-2} = a_{k-1} = a_{k} = \hdots = a_{k+m-1} = 1$ for some $m \geq 1$. Then
$$\det(K) > \left(\frac{3}{2} \right)^m \det(K') $$ \label{part:reductionfactorb}
\item Suppose $k \geq 4$ and $a_{k-2} = a_{k-1} = a_{k} = \hdots = a_{k+m-1} = 1$ for some $m \geq 1$. If $$2 \pi \log (V(a_1, \hdots, a_{k-1}, a_{k + m}, \hdots, a_n)) \leq 2 \pi \log(\det(K'))$$ then  $\emph{vol}(K) < 2 \pi \log(\det(K))$. \label{part:reductionfactorc}
\end{enumerate} 
\end{lemma}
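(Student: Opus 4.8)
The plan is to prove the three parts in sequence, using (a) as the basic step, deriving (b) by iterating (a), and obtaining (c) by combining (b) with the volume estimate.

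For part (a) I would run a two-step induction comparing the two recurrences. Since $\widehat T(i) = T(i)$ for $i < k$, the hypotheses $T(k) > \frac32 T(k-1)$ and $T(k-1) > \frac32 T(k-2)$ are precisely the statements $T(k) > \frac32 \widehat T(k-1)$ and $T(k-1) > \frac32 \widehat T(k-2)$, which I take as the base cases of the claim $P(i)\colon T(i) > \frac32 \widehat T(i-1)$. For the inductive step, $T(i+1) = a_{i+1}T(i)+T(i-1)$ and $\widehat T(i) = a_{i+1}\widehat T(i-1)+\widehat T(i-2)$ have matching coefficients, so $P(i)$ and $P(i-1)$ give
\[
T(i+1) = a_{i+1}T(i)+T(i-1) > \tfrac32\bigl(a_{i+1}\widehat T(i-1)+\widehat T(i-2)\bigr) = \tfrac32\,\widehat T(i),
\]
i.e.\ $P(i+1)$. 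At $i=n$ this is $T(n) > \frac32\widehat T(n-1)$. Finally, when only $a_k$ is deleted, $\widehat T$ and the determinant recurrence of $K'$ satisfy the same recursion $x(i) = a_{i+1}x(i-1)+x(i-2)$ for $i\ge k$ with the same values at $i=k-1,k-2$, so $\widehat T(n-1)=\det(K')$ and hence $\det(K)=T(n) > \frac32\det(K')$.

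For part (b) I would delete the $m$ prescribed ones one at a time, setting $K_j = R(a_1,\dots,a_{k-1},a_{k+j},\dots,a_n)$ so that $K_0=K$, $K_m=K'$, and $K_{j+1}$ is obtained from $K_j$ by deleting the entry $a_{k+j}=1$ in position $k$. Applying (a) to each passage $K_j\to K_{j+1}$ gives $\det(K_j) > \frac32\det(K_{j+1})$, and multiplying the $m$ inequalities yields $\det(K) > (\frac32)^m\det(K')$. The one thing to check is that the hypotheses of (a) hold at every stage. Writing $T_j$ for the recurrence of $K_j$, the unchanged prefix $a_1,\dots,a_{k-1}$ gives $T_j(k-1)=T(k-1)$ and $T_j(k-2)=T(k-2)$, while the deleted entry being $1$ gives $T_j(k)=T(k-1)+T(k-2)$ for all $j<m$; thus both hypotheses of (a) collapse, uniformly in $j$, to the single requirement $\frac32 < r_{k-1} < 2$, where $r_i := T(i)/T(i-1)$.

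The heart of the matter, and the main obstacle, is this ratio window, which is exactly what the two consecutive ones $a_{k-2}=a_{k-1}=1$ and the bound $k\ge 4$ secure. From $r_i = a_i + 1/r_{i-1}$ and $r_i>1$ for $i\ge2$, the equality $a_{k-2}=1$ gives $r_{k-2}=1+1/r_{k-3}\in(1,2)$, and then $a_{k-1}=1$ gives $r_{k-1}=1+1/r_{k-2}\in(\tfrac32,2)$, as needed; the single borderline case $k=4$ with $a_1=1$ (where $r_{k-3}=r_1=1$) is checked by hand. Part (c) is then immediate: each deleted entry being $1$ contributes a factor $\frac32$ to the product defining $V$, so $V(a_1,\dots,a_n)=(\frac32)^m V(K')$ with $V(K')=V(a_1,\dots,a_{k-1},a_{k+m},\dots,a_n)$, and combining the volume bound with the hypothesis $V(K')\le\det(K')$ and part (b) gives
\[
\vol[K] < 2\pi\log\!\bigl((\tfrac32)^m V(K')\bigr) \le 2\pi\log\!\bigl((\tfrac32)^m\det(K')\bigr) < 2\pi\log(\det(K)),
\]
as claimed.
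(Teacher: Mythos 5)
Your proposal is correct, and its skeleton is the paper's: part (a) by the same two-base-case induction on the paired recurrences (your $P(i)$ is the paper's claim $T(k+i+1)>\tfrac{3}{2}\widehat{T}(k+i)$), part (b) by deleting the ones one at a time and invoking (a) at each stage, and part (c) by the identical combination of the volume bound, the factorization $V(a_1,\dots,a_n)=(\tfrac{3}{2})^m V(a_1,\dots,a_{k-1},a_{k+m},\dots,a_n)$, and part (b). Where you genuinely differ is in how the hypotheses of (a) are checked inside (b). The paper expands directly: from $a_{k-2}=a_{k-1}=1$ it gets $T(k-1)=2T(k-3)+T(k-4)\geq\tfrac{3}{2}T(k-2)$, similarly $T(k)\geq\tfrac{3}{2}T(k-1)$, and it re-invokes this computation at each step of its induction on $m$. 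You instead collapse both hypotheses to the single ratio window $\tfrac{3}{2}<T(k-1)/T(k-2)<2$ and derive it from $r_i=a_i+1/r_{i-1}$; this buys uniformity (one verification serves all $m$ deletions, since the prefix never changes) and it pinpoints exactly where the argument is tight.

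Indeed, your ratio analysis exposes a case the paper silently elides: when $k=4$ and $a_1=1$ (prefix $1,1,1,1$) one has $T(2)=2$, $T(3)=3$, so $T(3)=\tfrac{3}{2}T(2)$ holds with \emph{equality} and the strict hypotheses of (a) literally fail; note the paper's own computation only yields ``$\geq$'' there, yet it then applies (a), which demands ``$>$''. Your ``checked by hand'' is thus the one loose end, and since this borderline case is an infinite family (every sequence beginning $1,1,1,1$), it is not a finite check. It does, however, close in one line: in that case the upper base case is still strict, $T(4)=5>\tfrac{9}{2}=\tfrac{3}{2}T(3)$, and in the induction of (a) strictness of $P(i)$ alone forces strictness of $P(i+1)$, because $a_{i+1}T(i)>\tfrac{3}{2}a_{i+1}\widehat{T}(i-1)$ is strict while the weak inequality $T(i-1)\geq\tfrac{3}{2}\widehat{T}(i-2)$ suffices for the second summand; hence $P(k),P(k+1),\dots,P(n)$ are all strict and the conclusions of (a) and (b) still hold. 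With that patch your argument is complete --- and it repairs the same soft spot in the paper's own proof.
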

\begin{proof}
To begin the proof of part \ref{part:reductionfactora} by proving the following claim: 
\begin{equation}\label{eqn:redaclaim}
T(k + i + 1) > (3/2)\widehat{T}(k+i) \text{ for all } i \geq -1
\end{equation} 
The case where $i = -1$ holds since 
\begin{equation*}
T(k) > \dfrac{3}{2}T(k-1) = \dfrac{3}{2}\widehat{T}(k-1)
\end{equation*}
To prove the case where $i = 0$, note that
\begin{align*}
T(k + 1) & = a_{k+1} T(k) + T(k-1)\\
& > \frac{3}{2} [a_{k+1}T(k-1) + T(k-2)]\\
& = \frac{3}{2}\widehat{T}(k)
\end{align*}
We now proceed by induction. Assume that $T(k + i + 1) > \frac{3}{2}\widehat{T}(k+i)$ and $T(k + i) > \frac{3}{2}\widehat{T}(k + i -1)$. Then 
\begin{align*}
T( k + i + 2 ) & = a_{k + i + 2} T(k + i + 1) + T(k + i)\\
& > \frac{3}{2} [ a_{k + i + 2} \widehat{T}(k + i) + \widehat{T}(k + i -1)]\\
& = \frac{3}{2} \widehat{T}(k + i + 1)
\end{align*}
This proves (\ref{eqn:redaclaim}). Observe that $T(n) = \det(R(a_1, \hdots, a_n))$ and $\widehat{T}(n-1) = \det(R(a_1, \hdots, a_{k-1},a_{k+1}, \hdots, a_n))$, thus completing the proof of part \ref{part:reductionfactora}.\\

\noindent We prove part \ref{part:reductionfactorb} by induction on $m$. Note that 
\begin{align*}
T(k-1) & = T(k-2) + T(k-3)\\
& = 2T(k-3) + T(k-4) && \text{since } T(k-2) = T(k-3) + T(k-4)\\
& \geq \frac{3}{2}T(k-3) + \frac{3}{2}T(k-4)&& \text{since } T(k-3)\geq T(k-4)\\
& = \frac{3}{2}T(k-2)
\end{align*}
Similarly, $T(k) \geq \frac{3}{2} T(k-1)$. Then by part \ref{part:reductionfactora}
\begin{equation}
\det(R(a_1, \hdots, a_n)) = T(n) > \frac{3}{2} \widehat{T}(n-1) = \frac{3}{2}\det (R(a_1, \hdots, a_{k-1}, a_{k+1}, \hdots, a_n))
\end{equation}
This proves the case where $m = 1$. Assume that 
\begin{equation}\label{induct1}
\det(K) > \left(\dfrac{3}{2}\right)^{m-1} \det(R(a_1,\hdots, a_{k-1},a_{k+m-1},\hdots, a_n)
\end{equation} 
Since $a_{k-2} = a_{k-1} = a_{k+m-1} = 1$ we may use part \ref{part:reductionfactora} to obtain
\begin{align*}\label{induct2}
\det(K) & >\left(\dfrac{3}{2}\right)^{m-1}\det(R(a_1, \hdots, a_{k-1}, a_{k+m-1}, a_{k+m}, \hdots, a_n)) && \text{by (\ref{induct1})}\\
& > \left(\dfrac{3}{2}\right)^m\det(R(a_1, \hdots, a_{k-1}, a_{k+m}, \hdots, a_n)) && \text{by part \ref{part:reductionfactora}}
\end{align*}
which finishes the proof of part \ref{part:reductionfactorb}. 
\noindent To prove part \ref{part:reductionfactorc}, observe that if 
\begin{equation}\label{eqn:assume}
2 \pi \log(V(a_1, \hdots, a_{k-1}, a_{k + m}, \hdots a_n)) < \det(R(a_1, \hdots, a_{k-1}, a_{k+m}, \hdots, a_n))
\end{equation}
then 
\begin{align*}
\vol[K] & < 2 \pi \log (V(a_1, \hdots, a_n))\\
& = 2 \pi \log \left(\left(\frac{3}{2}\right)^m V(a_1, \hdots, a_{k-1}, a_{k + m}, \hdots, a_n)\right)\\
& < 2 \pi \log \left(\left(\frac{3}{2}\right)^m \det(R(a_1, \hdots, a_{k-1}, a_{k+m}, \hdots, a_n))\right) && \text{by (\ref{eqn:assume})}\\
& < 2 \pi \log(\det(K)) && \text{by part \ref{part:reductionfactorb}}
\end{align*} 
\end{proof}

Using Lemma \ref{lem:reductionfactor} we can reduce to the case where we do not have $a_{k-2} = a_{k-1} = a_{k} = 1$ for any $k \geq 4$, \textit{i.e.} there is no subsequence of three or more consecutive ones. Next we will prove Lemma \ref{lem:decomposition} which empowers us to bound $\det(R(a_1,\hdots, a_n))$ by breaking up the sequence $a_1,\hdots,a_n$ into shorter subsequences.

\begin{lemma}\label{lem:decomposition}
Let $1 \leq k \leq n-1$. Then 
$$\det R(a_1, \hdots,a_n) > \det R(a_1, \hdots, a_k) \det(R(a_{k+1}, \hdots, a_n))$$
\end{lemma}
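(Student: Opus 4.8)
The plan is to recognize the recurrence $T$ of Theorem \ref{thm:DetRecurrence} as a \emph{continuant} and to exploit the classical splitting identity for continuants, which expresses $T(n)$ as $\det R(a_1,\dots,a_k)$ times the determinant of the tail link plus a strictly positive correction term; discarding that term yields the strict inequality. Since $T(m) = \det R(a_1,\dots,a_m)$ satisfies $T(m) = a_m T(m-1) + T(m-2)$, this is exactly the continuant $K(a_1,\dots,a_m)$, and the product structure of the lemma is the hallmark of Euler's continuant identity.

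Concretely, write $U(j) = \det R(a_{k+1},\dots,a_{k+j})$, so that applying Theorem \ref{thm:DetRecurrence} to the shifted sequence $a_{k+1},a_{k+2},\dots$ gives $U(0)=1$, $U(1)=a_{k+1}$, and $U(j+1) = a_{k+j+1}U(j) + U(j-1)$; in particular $U(n-k) = \det R(a_{k+1},\dots,a_n)$. Also write $\widetilde U(j) = \det R(a_{k+2},\dots,a_{k+j})$, using the conventions that $\det R$ of the empty sequence is $1$ and of a length $(-1)$ sequence is $0$, so that $\widetilde U(0)=0$ and $\widetilde U(1)=1$. The main step is to prove, by induction on $j$, the identity
$$T(k+j) = T(k)\,U(j) + T(k-1)\,\widetilde U(j), \qquad 0 \le j \le n-k.$$
The base cases $j=0$ and $j=1$ are immediate from the definitions and the recurrence \eqref{eqn:detrec}. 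For the inductive step one expands $T(k+j+1) = a_{k+j+1}T(k+j) + T(k+j-1)$ using the identity for $j$ and $j-1$, then regroups the terms multiplying $T(k)$ and $T(k-1)$ separately, recognizing the two groups as the one-step recurrences $a_{k+j+1}U(j)+U(j-1)=U(j+1)$ and $a_{k+j+1}\widetilde U(j)+\widetilde U(j-1)=\widetilde U(j+1)$; this delivers the identity for $j+1$.

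Once the identity is established, the lemma follows by setting $j = n-k$. Since every $a_i$ is a positive integer, the recurrences produce only positive values, and in particular $T(k-1) \ge T(0) = 1$ because $k \ge 1$, while $\widetilde U(n-k) \ge \widetilde U(1) = 1$ because $n-k \ge 1$. Hence the correction term $T(k-1)\,\widetilde U(n-k)$ is strictly positive and
$$\det R(a_1,\dots,a_n) = T(k)\,U(n-k) + T(k-1)\,\widetilde U(n-k) > T(k)\,U(n-k) = \det R(a_1,\dots,a_k)\,\det R(a_{k+1},\dots,a_n).$$
I expect the only real obstacle to be bookkeeping: keeping the three index-shifted recurrences $T$, $U$, and $\widetilde U$ straight, and correctly handling the degenerate (empty and length $(-1)$) continuants in the base cases so that the single identity covers all $j$ uniformly. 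The positivity that forces the strict inequality is cheap once $a_i \ge 1$ is invoked, so the entire conceptual content lies in the continuant splitting identity.
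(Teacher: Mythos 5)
Your proof is correct and is essentially the paper's own argument: your splitting identity $T(k+j) = T(k)\,U(j) + T(k-1)\,\widetilde{U}(j)$ is exactly the paper's equation (\ref{eqn:claim}), $T(k+m) = T'(m)T(k) + T''(m-1)T(k-1)$, with your $U = T'$ and $\widetilde{U}(j) = T''(j-1)$, proved by the same induction and concluded by the same positivity of the discarded term. The only cosmetic differences are your continuant framing and the convention $\widetilde{U}(0)=0$, which lets the identity hold uniformly from $j=0$ rather than $m=1$.
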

\begin{proof}
Let $T(i)$ be the recursion defined in Theorem \ref{thm:DetRecurrence}. Define the following recursive sequences $T'(i)$ and $T''(i)$:
\begin{align}\label{eqn:seq1}
&\left\{
\begin{array}{lll}
T'(0) & = & 1 \\
T'(1) & = & a_{k+1}\\
T'(i) & = & a_{k + i}T'(i-1) + T'(i-2)
\end{array} \right.
\\
\label{eqn:seq2}
&\left\{
\begin{array}{lll}
T''(0) & = & 1 \\
T''(1) & = & a_{k+2}\\
T''(i) & = & a_{k + i + 1}T''(i-1) + T''(i-2)
\end{array} \right.
\end{align}
Note that 
\begin{equation}\label{eqn:note}
T'(n-k) = \det(R(a_{k+1}, a_{k+2}, \hdots, a_n))
\end{equation}
We will show that
\begin{equation}\label{eqn:claim}
T(k + m) = T'(m)T(k) + T''(m-1)T(k-1)
\end{equation}
for $m \geq 1$. We proceed by induction on $m$. When $m = 1$ we have
\begin{align}
T(k + 1) &= a_{k+1}T(k) + T(k-1) &&\text{by (\ref{eqn:detrec})}\\
& = T'(1)T(k) + T''(0)T(k-1) && \text{since $T'(1) = a_{k+1}$ and $T''(0) = 1$ by definition } \label{eqn:m=1}
\end{align} 
When $m = 2$ we have
\begin{align*}
T(k + 2) & = a_{k + 2}T(k+1) + T(k) && \text{by (\ref{eqn:detrec})}\\
& = a_{k+2}T'(1)T(k) + a_{k+2}T''(0)T(k-1) + T'(0)T(k) && \text{by (\ref{eqn:m=1}) and $T'(0) = 1$}\\
& = [a_{k+2}T'(1) + T'(0)]T(k) + a_{k+2}T(k-1) && \text{since $T''(0) = 1$}\\
& = T'(2)T(k) + T''(1)T(k-1) && \text{by (\ref{eqn:seq1}) and $T''(1) = a_{k+2}$ }
\end{align*}
Now assume that 
\begin{equation}\label{eqn:inducthyp}
T(k + m) = T'(m)T(k) + T''(m-1)T(k-1)
\end{equation} for $m \geq 2$. Then
\begin{align*}
T(k + m + 1) & = a_{k + m + 1}T(k+m) + T(k + m - 1)
\end{align*}
Which by applying (\ref{eqn:inducthyp}) to $T(k + m)$ and $T(k + m -1)$ becomes  
\begin{align}\label{eqn:twoliner}
T(k + m + 1) & = a_{k + m + 1}T'(m)T(k) + a_{k + m + 1}T''(m-1)T(k-1)\\
\notag &\quad\quad + T'(m-1)T(k) + T''(m-2)T(k-1)
\end{align}
By collecting like terms (\ref{eqn:twoliner}) simplifies to
\begin{align}
T(k + m + 1) & = [a_{k+m+1}T'(m) + T'(m-1)]T(k) \label{eqn:collect}\\
\notag &\quad \quad + [a_{k+m+1}T''(m-1) + T''(m-2)]T(k-1)
\end{align}
By (\ref{eqn:seq1}) we have that 
\begin{equation}
a_{k+m+1}T'(m) + T'(m-1) = T'(m+1)\label{eqn:fact1}
\end{equation}
and by (\ref{eqn:seq2}) we also know that 
\begin{equation}
a_{k+m+1}T''(m-1) + T''(m-2) = T''(m)\label{eqn:fact2}
\end{equation} 
Combining (\ref{eqn:collect}), (\ref{eqn:fact1}), and (\ref{eqn:fact2})  we see that  
\begin{align*}
T(k + m + 1) = T'(m+1)T(k) + T''(m)T(k-1)
\end{align*}
Finally, we observe that
\begin{align*}
\det(R(a_1, \hdots, a_n)) & = T(n)\\
& = T(k + n - k)\\
& = T'(n-k)T(k) + T''(n - k -1) T(k-1) && \text{by (\ref{eqn:claim})}\\
& > T'(n-k)T(k)\\
& = \det(R(a_{k + 1}, \hdots, a_n))\det(R(a_1, \hdots, a_k)) && \text{by (\ref{eqn:note})}
\end{align*}
\end{proof} 

Using Lemmata \ref{lem:reductionfactor} and \ref{lem:decomposition} we will break up the sequence $a_1,\hdots, a_n$ into smaller subsequences which will have one of the eleven special types listed in the following lemma.

\begin{lemma}\label{lem:sixcases}
Let $a_1, \hdots, a_n$ be a sequence of one of the following eleven types:
\begin{enumerate}
\item $a_1$ where $a_1 \geq 2$ 
\item $1, a_2$ where $a_2 \geq 2$
\item $a_1, 1$ where $a_1 \geq 2$
\item $1, 1, a_3$ where $a_3 \geq 2$
\item $a_1, 1, 1$ where $a_1 \geq 2$
\item $1, 1, 1, a_4$ where $a_4 \geq 2$
\item $1, a_2, 1, 1$ where $a_2 \geq 2$
\item $1, 1, a_3, 1$ where $a_3 \geq 2$
\item $1, 1, a_3, 1, 1$ where $a_3 \geq 2$
\item $1, a_2, 1, a_4, 1$ where $a_2 \geq 2$ and $a_4 \geq 2$
\item $1, 1, a_3, 1, a_5, 1$ where $a_3 \geq 2$ and $a_5 \geq 2$
\end{enumerate}
Let $T(i)$ be the recurrence of Theorem \ref{thm:DetRecurrence}. Then $V(a_1, \hdots, a_n) \leq T(n)$. 
\end{lemma}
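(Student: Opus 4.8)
The statement concerns only finitely many sequences, each of length at most six, so the plan is a direct verification. In each of the eleven cases I would unfold the recurrence (\ref{eqn:detrec}) to obtain a closed form for $T(n)$ as an explicit polynomial in the free entries (those $a_i \geq 2$), write out $V(a_1,\dots,a_n)$ from its definition, and confirm $V \leq T(n)$ by comparing the two polynomials. The entries equal to $1$ contribute a fixed factor $\tfrac{3}{2}$ to $V$ and enter the recurrence with no free parameter, so the only variables in play are the one or two entries that are $\geq 2$.

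The observation that makes each comparison routine is that both $T(n)$ and $V$ are affine in each free variable $a_i$: the factor $V$ is a product of linear terms, and an easy induction on the recurrence (\ref{eqn:detrec}) shows that $T(m)$ is affine in each fixed entry $a_j$, since for $m \ge j$ the recurrence multiplies the $a_j$-affine quantity $T(m-1)$ by the $a_j$-free constant $a_m$ and adds the $a_j$-free quantity $T(m-2)$. Hence $T(n)-V$ is multilinear in the free variables. For each of the eleven types I expect every coefficient of a positive-degree monomial of $T(n)-V$ to be nonnegative, so that $T(n)-V$ is nondecreasing in each variable on the region where all free $a_i \geq 2$; it then suffices to check nonnegativity at the single corner where every free $a_i$ equals $2$, reducing each case to evaluating one rational number.

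To illustrate the pattern: in type (1) one has $T(1)=a_1$ and $V=\tfrac{a_1+2}{2}$, so $T(1)-V=\tfrac{a_1-2}{2}\geq 0$; in type (9), unfolding gives $T(5)=4a_3+4$ against $V=\tfrac{81}{32}(a_3+2)$, whence $T(5)-V=\tfrac{47a_3-34}{32}$, nonnegative for $a_3\geq 2$. The only cases with two free parameters are types (10) and (11), where $T(n)-V$ is genuinely bilinear; for type (10) one computes
\begin{equation*}
T(5)-V = \tfrac{1}{32}\bigl(5\,a_2 a_4 + 10\,a_2 + 10\,a_4 - 12\bigr),
\end{equation*}
whose positive-degree coefficients are all positive and whose value at $a_2=a_4=2$ is $\tfrac{48}{32}>0$.

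I do not anticipate a conceptual obstacle; the work is entirely the bookkeeping of eleven unfoldings of (\ref{eqn:detrec}) together with the arithmetic of the comparisons. The one point to state carefully is the monotonicity claim: for each type I should record that the positive-degree coefficients of $T(n)-V$ are indeed nonnegative before invoking the corner check, since this is exactly what licenses reducing an inequality over the unbounded region $a_i\geq 2$ to a single evaluation. The tightest margins occur in types (7) and (8), where $T(4)-V=\tfrac{5a_i-6}{16}$ equals only $\tfrac{4}{16}$ at $a_i=2$, so the arithmetic in those two cases deserves the closest double-checking.
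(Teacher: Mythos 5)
Your proposal is correct and takes essentially the same approach as the paper: a case-by-case unfolding of the recurrence to get a closed form for $T(n)$, followed by direct comparison with $V$, and your computed differences (e.g. $T(5)-V=\tfrac{1}{32}(5a_2a_4+10a_2+10a_4-12)$ for type (10), and $\tfrac{5a_i-6}{16}$ for types (7) and (8)) agree exactly with the paper's. Your multilinearity-plus-corner-evaluation observation is just a systematic packaging of what the paper does implicitly when it invokes $a_i\geq 2$ to conclude each difference is nonnegative.
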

\begin{proof}
For type (1), one readily obtains that ${a}_1 + 2 \leq 2 {a}_1$ implying $$V(a_1) = \frac{{a}_1+2}{2} \leq \frac{2{a}_1}{2} =  T(1)$$
For type (2), we see that $T(2) = a_2 + 1$ and $$V(1, a_2) = \frac{3(a_2 + 2)}{4}$$ When $a_2 \geq 2$, one readily obtains $V(1, a_2) < T(2)$.\\

\noindent For type (3), we have $T(2) = a_1 + 1$ and $$V(a_1, 1) = \frac{3(a_1 + 2)}{4}$$ and the proof proceeds in a similar manner to type (2).\\

\noindent For type (4), $T(3) = 2 a_3 + 1$ and $$V(1, 1, a_3) = \frac{9(a_3+2)}{8}$$ When $a_3 \geq 2$ one readily obtains $$\frac{9(a_3+2)}{8} \leq 2 a_3 + 1$$\\
For type (5), we have $T(3) = 2a_1 + 1$ and $$V(a_1, 1, 1) = \frac{9(a_1 + 2)}{8}$$ and the proof proceeds similarly to type (4).\\

\noindent For type (6), we have $T(4) = 3a_4 + 2$ and $$V(1, 1, 1, a_4) = \frac{27(a_4 + 2)}{16}$$ When $a_4 \geq 2$, one may show that $$  \frac{27(a_4 + 2)}{16} \leq 3a_4 + 2$$\\
For type (7), we have $T(4) = 2a_2 + 3$ while $$V(1,a_2,1,1) = \frac{27}{16}(a_2 + 2)$$ Then $$T(4) - V(1,a_2,1,1) = \frac{5}{16}a_2 - \frac{3}{8} \geq 0$$ since $a_2 \geq 2$. \\

\noindent For type (8), we have $T(4) = 2a_3 + 3$ and $$V(1,1, a_3, 1) = \frac{27}{16}(a_3+2)$$ The proof is now similar to type (7).\\

\noindent For type (9), we have $T(5) = 4a_3 + 4$ and $$V(1,1,a_3,1,1) = \frac{81}{32}(a_3 + 2)$$ Then $$T(5) - V(1,1,a_3,1,1) = \frac{47}{32}a_3 - \frac{17}{16} \geq 0$$ since $a_3 \geq 2$. \\

\noindent For type (10), we have $T(5) = a_2a_4 + 2a_2 + 2a_4 + 3$ and $$V(1,a_2,1,a_4,1) = \frac{27}{32}(a_2a_4 + 2a_2 + 2a_4 + 4)$$ Then $$T(5) - V(1,a_2,1,a_4,1) = \frac{5}{32}(a_2a_4 + 2a_2 + 2a_4) - \frac{3}{8} \geq 0$$ since $a_2 \geq 2$ and $a_4 \geq 2$.\\

\noindent For type (11), we have $T(6) = 2a_3a_5 + 4a_3 + 3a_5 + 4$ an $$V(1, 1, a_3, 1, a_5, 1) = \frac{81}{64}(a_3a_5 + 2a_3 +2a_5 + 4)$$ Then $$T(6) - V(1,1,a_3,1,a_5,1) = \frac{47}{64}a_3a_5 + \frac{47}{32}a_3 + \frac{15}{32}a_5 - \frac{17}{16} \geq 0$$ since $a_3 \geq 2$ and $a_5 \geq 2$.
\end{proof}

\noindent We are now prepared to present the proof of Theorem \ref{thm:rationaldetvol}.

\begin{theorem}\label{thm:rationaldetvol}
Let $K$ be the 2-bridge link $R(a_1, a_2, \hdots, a_n)$. Then $\emph{vol}(K) < 2 \pi \log \det(K)$.
\end{theorem}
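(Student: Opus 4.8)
The plan is to replace the geometric statement by the purely combinatorial inequality $V(a_1,\dots,a_n)\le \det(K)$. By Corollary~\ref{cor:AdamsVolumeBound} and the weakening recorded immediately after it we already have $\vol[K]<2\pi\log V(a_1,\dots,a_n)$, and $\det(K)=T(n)$ by Theorem~\ref{thm:DetRecurrence}; so once $V(a_1,\dots,a_n)\le T(n)$ is established (equality permitted), strictness of the volume bound and monotonicity of $\log$ finish the theorem at once. Everything then becomes a statement about the continuant $T$ and the product $V$, and I would use throughout that both are invariant under reversing $a_1,\dots,a_n$ (the standard continuant symmetry, reflecting $R(a_1,\dots,a_n)=R(a_n,\dots,a_1)$).

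First I would remove long blocks of $1$'s using Lemma~\ref{lem:reductionfactor}. Deleting $m$ entries equal to $1$ divides $V$ by exactly $(3/2)^m$, while part~(b) multiplies the lower bound for $\det$ by the same factor $(3/2)^m$; so, exactly as in the proof of part~(c), it suffices to prove $V\le\det$ for the sequence that remains after these deletions. Applying the reduction repeatedly, and invoking the reversal symmetry to expose a long run at an end when it sits at the front, I may assume $a_1,\dots,a_n$ has no interior run of three or more consecutive $1$'s, so that only the short runs of $1$'s occurring in the eleven types of Lemma~\ref{lem:sixcases} survive.

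Next I would combine Lemmata~\ref{lem:decomposition} and~\ref{lem:sixcases}. The crucial structural fact is that $V$ is \emph{exactly} multiplicative under concatenation, $V(a_1,\dots,a_n)=V(a_1,\dots,a_k)\,V(a_{k+1},\dots,a_n)$, whereas $\det$ is \emph{super}-multiplicative by Lemma~\ref{lem:decomposition}. Hence if the reduced sequence can be cut into consecutive blocks each of one of the eleven types, then
\[
V(a_1,\dots,a_n)=\prod_{\text{blocks}}V(\text{block})\ \le\ \prod_{\text{blocks}}\det(\text{block})\ \le\ \det(K),
\]
the middle step being Lemma~\ref{lem:sixcases} applied block by block and the last step the iterated Lemma~\ref{lem:decomposition}. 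This reduces the entire theorem to producing such a decomposition.

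The existence of this decomposition is the crux and the step I expect to be hardest. Each anchor (an entry $\ge 2$) must be bundled with its adjacent $1$'s, and the eleven patterns tightly restrict how many $1$'s may sit on either side; in particular a lone anchor may not carry a single $1$ on each side, which is precisely why the two-anchor types~(10) and~(11) are needed, to swallow chains of the shape $\cdots 1\,a\,1\,a\,1\cdots$. I would produce the cut by a left-to-right sweep: attach the leading run of $1$'s to the first anchor, assign the single $1$ in each length-one gap to the anchor on its left while splitting length-two gaps as convenient, and, whenever the sweep would be forced to leave an anchor with exactly one $1$ on each side, instead peel off the last two anchors of the offending chain as a type-(10) or type-(11) block. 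The only configurations admitting no legal cut are the single-anchor block $1,a,1$ and the all-$1$'s sequence; a short continued-fraction computation identifies these with $(2,q)$-torus links, which are non-hyperbolic and therefore fall outside the hypotheses (note that for such sequences one indeed has $V>\det$, so hyperbolicity is essential here). Verifying that the sweep always terminates in a valid tiling for every remaining reduced sequence, and disposing of the finitely many boundary interactions at the two ends, is the delicate heart of the argument.
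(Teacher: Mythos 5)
Your overall strategy coincides with the paper's: reduce the theorem to the combinatorial inequality $V(a_1,\dots,a_n)\le\det(K)$, strip out long runs of $1$'s with Lemma~\ref{lem:reductionfactor}, and tile what remains by blocks of the eleven types, combining the exact multiplicativity of $V$ with the super-multiplicativity of $\det$ (Lemma~\ref{lem:decomposition}) and the block-wise bound of Lemma~\ref{lem:sixcases}; this is the paper's Case 3. The genuine gap is in how you dispose of the exceptional sequences that admit no tiling. You assert that the all-$1$'s sequence yields a $(2,q)$-torus link, hence a non-hyperbolic link outside the hypotheses. That is false: $R(1,1,\dots,1)$ with $n$ ones is the 2-bridge link of fraction $F_{n+1}/F_n$ (consecutive Fibonacci numbers), and for $n\ge 4$ these are hyperbolic --- indeed $R(1,1,1,1)$ is the figure-eight knot. (Your identification is correct only for $R(1,a,1)$, which is the $(2,a+2)$-torus link, and for the all-ones sequences with $n\le 3$; note the paper nevertheless verifies the $R(1,a_2,1)$ case directly as its Case 1.)

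Worse, the all-ones case cannot be absorbed into your framework at all: for $n=4$ one has $V(1,1,1,1)=(3/2)^4=81/16>5=\det(R(1,1,1,1))$, so the inequality $V\le\det$, to which you reduce the entire theorem, is simply false for the figure-eight knot. (Your parenthetical claim that $V>\det$ for all such sequences is also wrong once $n\ge 5$: $V(1,1,1,1,1)=243/32<8=\det$.) The paper handles this family as a separate case: for $n=4$ it returns to the un-weakened bound of Corollary~\ref{cor:AdamsVolumeBound}, which gives $2\pi\log(9/4)<2\pi\log(5)$; for $n=5$ it checks $V=243/32<8$ directly; and for $n\ge 6$ it invokes Lemma~\ref{lem:reductionfactor} part~\ref{part:reductionfactorc} to reduce to the $n=5$ case. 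Your proof needs this (or an equivalent) treatment of the all-ones family before the block-decomposition argument can be considered complete.
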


\begin{proof}
We consider three cases: 
\begin{enumerate}
\item $n = 3, a_1 = 1, a_2 \geq 2, a_3 = 1$ \label{case:0}
\item $a_1=a_2 = \hdots = a_n = 1$ \label{case:one}
\item $a_i \neq 1$ for some $i$ and not case \ref{case:0}\label{case:2}
\end{enumerate}
\emph{Case \ref{case:0}}:\\
We can calculate that $\det(K) = a_2 + 2$. Using Corollary \ref{cor:AdamsVolumeBound} we see that 
\begin{equation}
\vol[K] < 2 \pi \log \left(\frac{a_2 + 2}{2} \right) < 2 \pi \log(a_2 + 2) = 2 \pi \log(\det(K))
\end{equation}

\noindent For the remaining cases, it suffices to show that $V(a_1,\hdots,a_n) \leq \det(K)$.\\

\noindent\emph{Case \ref{case:one}}:\\
If $n = 1$, $2$, or 3 then $K$ is $R(1)$, $R(1,1)$, or $R(1,1,1)$ respectively, none of which is hyperbolic. If $n = 4$ then $\det(K) = T(4) = 5$. Corollary \ref{cor:AdamsVolumeBound} implies that $$\vol[K] < 2 \pi \log\left(\frac{2\cdot 3 \cdot 3 \cdot 2}{2^4}\right) = 2 \pi \log \left(\frac{9}{4}\right) < 2 \pi \log(5) = 2 \pi \log (\det(K))$$ 
If $n = 5$ then $\det(K) = T(5) = 8$ and while $V(1,1,1,1,1) = 243/32 < 8$.
If $n \geq 6$ then we use Lemma \ref{lem:reductionfactor} part \ref{part:reductionfactorc}. We can let $k = 6$ and then the link $K'$ defined in Lemma \ref{lem:reductionfactor} part \ref{part:reductionfactorc} will be $R(1,1,1,1,1)$. The result now follows from the case $n = 5$ above. \\

\noindent \emph{Case \ref{case:2}}:\\
We may use Lemma \ref{lem:reductionfactor} part \ref{part:reductionfactorc} to assume that we do not have $a_{k-2} = a_{k-1} = a_k = 1$ for any $k \geq 4$, \textit{i.e.} $a_1,\hdots,a_n$ has no subsequences of three or more consecutive ones except possibly $a_1 = a_2 = a_3 = 1$. Let $m$ be the cardinality $|\{a_k \in \{a_i\}_{i = 1}^n : a_k \geq 2\}|$. We will partition the sequence $\{a_1, \hdots, a_n\} = \{b_1^{(1)}, \hdots, b_{n_1}^{(1)}, b_1^{(2)}, \hdots b_{n_2}^{(2)}, \hdots, b_1^{(m)}, \hdots b_{n_m}^{(m)}\}$ into subsequences of the types found in Lemma \ref{lem:sixcases} according to one of the cases below.\\

\noindent \emph{Case \ref{case:2}a}: $a_1 \geq 2$\\
Let $b_1^{(k)}$ be the $k$th element of the sequence $a_1, \hdots, a_n$ that is greater than or equal to 2. Then each subsequence $b_1^{(k)},\hdots, b_{n_k}^{(k)}$ is either type 1, 3, or 5 from Lemma \ref{lem:sixcases}.\\

\noindent \emph{Case \ref{case:2}b}: $a_1 = 1$ and $a_n \geq 2$\\
Let $b_{n_{k}}^{(k)}$ be the $k$th element of $a_1,\hdots,a_n$ that is greater than or equal to 2. Then each $b_1^{(k)},\hdots, b_{n_k}^{(k)}$ is either of type 1, 2, 4, or 6 in \ref{lem:sixcases}.\\

\noindent \emph{Case \ref{case:2}c}: $a_1 = a_{n-1} = a_n = 1$\\
For $1 \leq k \leq m-1$ let $b_{n_{k}}^{(k)}$ be the $k$th element of $a_1,\hdots,a_n$ that is greater than or equal to 2. Then each $b_1^{(k)},\hdots, b_{n_k}^{(k)}$ is either of type 1, 2, 4, or 6 in Lemma \ref{lem:sixcases}. Let $b_1^{(m)}, \hdots, b_{n_m}^{(m)}$ be the remaining elements of the sequence $a_1,\hdots, a_n$. Then $b_1^{(m)}, \hdots, b_{n_m}^{(m)}$ is either of type 7 or 9 from Lemma \ref{lem:sixcases}.\\

\noindent \emph{Case \ref{case:2}d}: $a_1 = a_n = 1$ and $a_{n-1} \neq 1$\\
For $1 \leq k \leq m-2$ let $b_{n_{k}}^{(k)}$ be the $k$th element of $a_1,\hdots,a_n$ that is greater than or equal to 2. Then each $b_1^{(k)},\hdots, b_{n_k}^{(k)}$ is either of type 1, 2, 4, or 6 in Lemma \ref{lem:sixcases}. Let $b_1^{(m)}, \hdots, b_{n_{m}}^{(m)}$ be the remaining elements of the sequence $a_1,\hdots, a_n$. Since case \ref{case:2} excludes cases \ref{case:0} and \ref{case:one}, $\{a_1, a_2, a_3\} \neq \{1, a_2, 1\}$. Therefore $b_1^{(m)}, \hdots, b_{n_{m}}^{(m-1)}$ is either of type 8, 10, or 11 from Lemma \ref{lem:sixcases}. For notational purposes, take $b_1^{(m-1)}, \hdots, b_{n_{m-1}}^{(m-1)}$ to be the empty sequence.\\

Now that we have partitioned the sequence $\{a_1, \hdots, a_n\} = \{b_1^{(1)}, \hdots, b_{n_1}^{(1)}, b_1^{(2)}, \hdots b_{n_2}^{(2)}, \hdots, b_1^{(m)}, \hdots b_{n_m}^{(m)}\}$ into subsequences of the types found in Lemma \ref{lem:sixcases}, we observe that
\begin{align*}
\det(K) & > \det(R( b_1^{(1)}, \hdots, b_{n_1}^{(1)}))\det (R(b_1^{(2)}, \hdots, b_{n_2}^{(2)})) \hdots \det (R(b_1^{(m)}, \hdots, b_{n_m}^{(m)})) && \text{by Lemma \ref{lem:decomposition}} \\
& \geq V(b_1^{(1)}, \hdots, b_{n_1}^{(1)}) V(b_1^{(2)}, \hdots, b_{n_2}^{(2)}) \hdots V(b_1^{(m)}, \hdots, b_{n_m}^{(m)}) && \text{by Lemma \ref{lem:sixcases}}\\
& = V(a_1, \hdots, a_n)
\end{align*}

\end{proof}

The proof of Theorem \ref{thm:rationaldetvol} also verified the following fact which will be used in Section \ref{sec:3_braids}.

\begin{corollary}\label{cor:detandV}
Let $K = R(a_1,\hdots, a_n)$ be a 2-bridge link. If $K \neq R(1,1,1,1)$, $R(1,a_2,1)$, $R(1,1)$ or $R(1)$ for any $a_2 \geq 1$ then
$$\det(R(a_1,\hdots,a_n)) \geq V(a_1,\hdots,a_n) $$
\end{corollary}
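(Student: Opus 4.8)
The plan is to extract the statement directly from the proof of Theorem \ref{thm:rationaldetvol}, where the inequality $\det(K) \geq V(a_1,\dots,a_n)$ is precisely the quantity already shown (``it suffices to show that $V(a_1,\dots,a_n) \leq \det(K)$'') in Cases \ref{case:one} and \ref{case:2}. So the entire task is to revisit the three cases of that proof, confirm that $\det \geq V$ is established for every link outside the four listed families, and verify that each listed family genuinely violates the inequality and therefore must be excluded.

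First I would handle the all-ones case (Case \ref{case:one}). Here $\det R(a_1,\dots,a_n) = T(n)$ is the Fibonacci sequence $1,1,2,3,5,8,13,\dots$ by Theorem \ref{thm:DetRecurrence}, while $V(1,\dots,1) = (3/2)^n$ since each entry equal to $1$ contributes a factor $(1+2)/2 = 3/2$. A direct computation gives $V > T(n)$ for $n \leq 4$ and $V = 243/32 < 8 = T(5)$ at $n=5$; since the ratio $T(n)/T(n-1)$ tends to $\varphi = (1+\sqrt 5)/2 > 3/2$, the inequality $(3/2)^n \leq T(n)$ persists for all $n \geq 5$. This persistence is exactly the reduction to the $n=5$ base case performed in the proof of Theorem \ref{thm:rationaldetvol}: removing $m = n-5$ ones via Lemma \ref{lem:reductionfactor}\ref{part:reductionfactorb} yields $\det > (3/2)^{n-5}\cdot 8 > (3/2)^{n-5}(3/2)^5 = (3/2)^n = V$. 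The all-ones links with $n \leq 4$ are $R(1)$, $R(1,1)$, $R(1,1,1)$, and $R(1,1,1,1)$, and $R(1,1,1) = R(1,a_2,1)$ with $a_2 = 1$, so these are precisely the excluded links among all-ones sequences.

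Next I would treat the mixed case (Case \ref{case:2}, with some $a_i \geq 2$ and not of the form $R(1,a_2,1)$). The decomposition in the proof of Theorem \ref{thm:rationaldetvol} already gives $\det(K) > \prod_j V(b_1^{(j)},\dots,b_{n_j}^{(j)}) = V(a_1,\dots,a_n)$ by combining Lemma \ref{lem:decomposition} with Lemma \ref{lem:sixcases}, which is the desired inequality. The one point requiring care is the preliminary reduction removing long runs of ones: I would observe that deleting $m$ consecutive ones multiplies $V$ by $(3/2)^{-m}$, so $V(a_1,\dots,a_n) = (3/2)^m V(K')$, while Lemma \ref{lem:reductionfactor}\ref{part:reductionfactorb} gives $\det(K) > (3/2)^m \det(K')$. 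Hence $\det(K') \geq V(K')$ for the reduced link transfers to $\det(K) > (3/2)^m V(K') = V(a_1,\dots,a_n)$, so the reduction preserves $\det \geq V$ in the correct direction and the leading run of at most three ones is absorbed into a type-$6$, $9$, or $11$ block of the partition.

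The main obstacle is not any single hard estimate but the bookkeeping needed to guarantee that the exception list is both complete and necessary. Concretely, I must confirm that the four excluded families are exactly the links with $V > \det$: $V(1) = 3/2 > 1$, $V(1,1) = 9/4 > 2$, $V(1,a_2,1) = \tfrac{9}{8}(a_2+2) > a_2+2 = \det$ for every $a_2 \geq 1$ (which subsumes $R(1,1,1)$), and $V(1,1,1,1) = 81/16 > 5$; while every other 2-bridge link falls either into Case \ref{case:one} with $n \geq 5$ or into Case \ref{case:2}, where $\det \geq V$ was shown above. The conceptual heart is the growth comparison $3/2 < \varphi$, which is exactly what forces the inequality to fail only for the finitely many short all-ones links and for the family $R(1,a_2,1)$.
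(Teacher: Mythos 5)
Your proposal is correct and follows essentially the same route as the paper: the paper derives Corollary \ref{cor:detandV} directly from the proof of Theorem \ref{thm:rationaldetvol}, namely the all-ones case reduced to $n=5$ via Lemma \ref{lem:reductionfactor}\ref{part:reductionfactorb}, and the mixed case via Lemma \ref{lem:decomposition} together with Lemma \ref{lem:sixcases}, with the $(3/2)^m$ factor transferring the inequality back through the reduction exactly as you describe. Your additional check that each excluded family genuinely satisfies $V > \det$ is a harmless (and correct) bonus beyond what the corollary requires.
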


\section{Alternating Braids}\label{sec:3_braids}

We will show in this section that Conjecture \ref{conj:det_vol} holds for alternating 3-braids and for an infinite family of 4-braids. The former fact will rely on the result of Theorem \ref{thm:rationaldetvol}, while the latter fact will be proved by bounding the hyperbolic volume and explicitly computing the determinant.

\subsection{3-Braids} Let $B(a_1, b_1, \hdots, a_n, b_n)$ denote the alternating 3-braid with $a_1$ positive crossings in the first twist region, $b_1$ negative crossings in the second twist region, and so on. See for example Figure \ref{fig:RationalBraidCheckerboard}. Note that up to reflection, this considers all alternating 3-braids, and that up to isomorphism all alternating 3-braids have an even number of twist regions. We state the main theorem for this section.

\begin{theorem}\label{thm:3braids}
If $K$ is an alternating 3-braid then $\emph{vol}(K) < 2 \pi \log \det(K)$.
\end{theorem}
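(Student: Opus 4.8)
The plan is to reproduce, for the closed alternating $3$-braid, the exact volume-versus-determinant comparison that worked for $2$-bridge links: bound the volume above by Adams' bipyramid estimate (Corollary \ref{cor:AdamsVolumeBound}) and the determinant below by reducing to the $2$-bridge case via Corollary \ref{cor:detandV}. By the normalization preceding the theorem it suffices to treat $B = B(a_1,b_1,\ldots,a_n,b_n)$. Writing $V(a_1,b_1,\ldots,a_n,b_n)=\prod_{i=1}^n\frac{(a_i+2)(b_i+2)}{4}$ for the same product function used in Section \ref{sec:two_bridge}, the goal is to establish the two inequalities $\text{vol}(B) < 2\pi\log V(a_1,b_1,\ldots,a_n,b_n)$ and $\det(B) \ge V(a_1,b_1,\ldots,a_n,b_n)$, after setting aside a few small, non-hyperbolic, or exceptional diagrams.

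First I would set up the volume bound. The standard alternating diagram of $B$ has $2n$ twist regions (the $a_i$ between the outer two strands, the $b_i$ between the inner two). Checkerboard-coloring and reading off the faces, the diagram has two \emph{long} faces coming from the outermost and innermost strands, $2n$ channel faces lying between consecutive twist regions, and $\sum(a_i-1)+\sum(b_i-1)$ bigons. A direct edge count shows the channel face at the $a_i$ (resp.\ $b_i$) region has $a_i+2$ (resp.\ $b_i+2$) sides, while the two long faces carry the largest edge counts. Feeding these numbers into Corollary \ref{cor:AdamsVolumeBound} and choosing $r,s$ to be the two long faces (so that the factor $4/rs$ cancels them), the bigon contributions cancel against the $2^m$ in the denominator and the remaining product telescopes to exactly $V(a_1,b_1,\ldots,a_n,b_n)$, giving $\text{vol}(B) < 2\pi\log V(a_1,b_1,\ldots,a_n,b_n)$, in perfect analogy with the displayed volume bound for $2$-bridge links.

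The determinant bound is the heart of the matter. I would identify one checkerboard graph of $B$ as a \emph{subdivided wheel}: a hub vertex joined to $n$ rim vertices by bundles of $a_1,\ldots,a_n$ parallel edges, with consecutive rim vertices joined by paths of $b_1,\ldots,b_n$ edges, closed into a cycle. Applying the deletion–contraction relation of Lemma \ref{lem:ContractionDeletion} to one edge of the closing $b_n$-path opens the cycle into a \emph{fan}, whose spanning-tree count obeys the continuant recurrence (\ref{eqn:detrec}); that is, the fan is the checkerboard graph of the $2$-bridge link with parameter sequence the interleaving $a_1,b_1,\ldots,a_n,b_n$. Since the second term $\tau(G/e)$ is a nonnegative count, this yields $\det(B) = \tau(G) > \tau(G-e) = \det R(a_1,b_1,\ldots,a_n,b_n)$. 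Corollary \ref{cor:detandV} then gives $\det R(a_1,b_1,\ldots,a_n,b_n) \ge V(a_1,b_1,\ldots,a_n,b_n)$, and combining with the volume bound closes the argument: $\text{vol}(B) < 2\pi\log V \le 2\pi\log\det(B)$.

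The hard part will be making the determinant step airtight rather than heuristic. Correctly identifying the checkerboard graph as this \emph{cyclic} object (not the linear fan of the $2$-bridge case) and tracking how opening the rim cycle produces a genuine $2$-bridge fan—including the bookkeeping at the two ends of the braid where the wrap-around lives, so that the sequence really is $a_1,b_1,\ldots,a_n,b_n$ and the two copies of $V$ match exactly—is the central technical point. A secondary difficulty is the exceptional inputs: when the associated $2$-bridge sequence falls into one of the cases excluded by Corollary \ref{cor:detandV} (such as $R(1,1,1,1)$ or $R(1,a_2,1)$), or when $B$ itself degenerates (small $n$, the all-ones braid, or the connected-sum type $B(a_1,b_1)$, which is not hyperbolic), the inequality $\det(B)\ge V$ must be checked or the link discarded by hand, exactly as the small-$n$ and ``consecutive ones'' cases were dispatched in the proof of Theorem \ref{thm:rationaldetvol}.
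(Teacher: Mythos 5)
Your overall strategy (reduce to showing $V(a_1,b_1,\ldots,a_n,b_n)\le\det(B)$, then invoke Corollary \ref{cor:detandV}) is the same as the paper's, but the central determinant step as you state it is false. In the cyclic checkerboard graph of $B(a_1,b_1,\ldots,a_n,b_n)$, the closing $b_n$ twist region is a path of $b_n$ edges in series. Deleting one edge $e$ of that path does not produce the $2$-bridge fan of the full interleaved sequence: it severs the path, and the remaining $b_n-1$ edges become pendant edges, which lie in every spanning tree and contribute nothing to the count. Hence $\tau(G-e)=\det R(a_1,b_1,\ldots,b_{n-1},a_n)$ --- the parameter $b_n$ disappears entirely --- and not $\det R(a_1,b_1,\ldots,a_n,b_n)$ as you claim (this is exactly what the paper's Figure \ref{fig:RationalBraidCheckerboard} caption records). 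Your one application of Lemma \ref{lem:ContractionDeletion} therefore yields only $\det(B)>V(a_1,\ldots,b_{n-1},a_n)$, which falls short of the target $V(a_1,\ldots,a_n,b_n)$ by the factor $(b_n+2)/2$. Concretely: for $B(1,1,1,1)$ (the figure-eight knot) one has $\tau(G)=5=\det R(1,1,1,1)$, so your strict inequality $\tau(G)>\tau(G-e)=\det R(1,1,1,1)$ cannot hold; and for $n=1$, $\det B(a,b)=ab<ab+1=\det R(a,b)$, so even the inequality $\det(B)\ge\det R(\text{full sequence})$ is false in general. (It is true for $n\ge 2$, but only via an induction built on the correct recursion below, not via a single deletion--contraction.)

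The missing factor is precisely what the paper's proof is organized around. Lemma \ref{lem:ThreeBraidEnds} first uses conjugation/cyclic symmetry of the closed braid to normalize $b_n\neq 1$, then applies deletion--contraction $b_n$ times along the closing path to get
\begin{equation*}
\det(B)=b_n\det R(a_1,b_1,\ldots,b_{n-1},a_n)+\det B(a_1+a_n,b_1,\ldots,a_{n-1},b_{n-1})\;\ge\; b_n\det R(a_1,b_1,\ldots,b_{n-1},a_n),
\end{equation*}
and recovers the lost factor from $b_n\ge(b_n+2)/2$, which is valid exactly because $b_n\ge 2$. This is also why the paper must treat separately the cases your normalization cannot reach: the all-ones braid $B(1,1,\ldots,1)$ (handled by an explicit Matrix Tree Theorem computation in Lemma \ref{lem:all1}), and the small cases $B(1,b_1)$ (not hyperbolic) and $B(1,b_1,1,b_2)$ (checked by hand). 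Note also that the figure-eight knot genuinely defeats your proposed chain of inequalities, since $\det(B(1,1,1,1))=5<81/16=V(1,1,1,1)$; there the sharper form of Corollary \ref{cor:AdamsVolumeBound} (with the $4/rs$ correction) is needed, as in Case 1 of the proof of Theorem \ref{thm:rationaldetvol}. So your instinct about where the difficulty lies is right, but the mechanism you propose to resolve it does not work, and repairing it forces you into the paper's iterated recursion rather than a one-step reduction.
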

The proof of Theorem \ref{thm:3braids} will follow immediately from from Lemmata \ref{lem:ThreeBraidEnds} and \ref{lem:all1}. Note that by using Corollary \ref{cor:AdamsVolumeBound} it is sufficient to show that $V(a_1, b_1, \hdots, a_n, b_n) \leq \det(B(a_1, b_1, \hdots, a_n, b_n))$.

\begin{figure}
\includegraphics[scale=1]{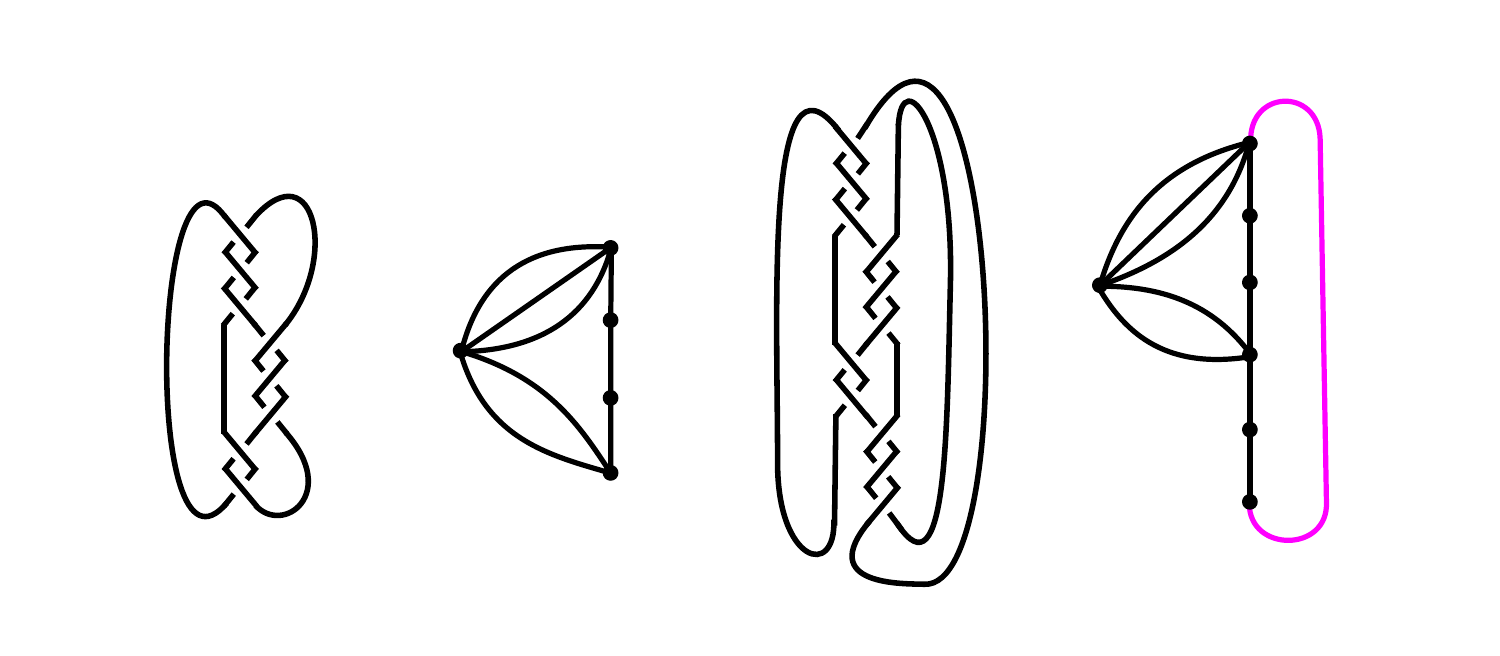}
\caption{\textsc{Left}: The two-bridge knot $R(3,3,2)$. \textsc{Center Left}: The checkerboard graph for $R(3,3,2)$. \textsc{Center Right}: The closed alternating 3-braid $B(3,3,2,3)$. \textsc{Right}: The checkerboard graph for $B(3,3,2,3)$. Note that the graph that results from deleting the highlighted edge has the same number of spanning trees as the checkerboard graph for $R(3,3,2)$. }\label{fig:RationalBraidCheckerboard}
\end{figure}

\begin{lemma}\label{lem:ThreeBraidEnds}
Let $K = B(a_1, b_1, \hdots, a_n, b_n)$. If $a_i \neq 1$ or $b_i \neq 1$ for some $i$ then $\emph{vol}(K) < 2 \pi \log \det(K)$.
\end{lemma}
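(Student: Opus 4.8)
The reduction noted after Theorem \ref{thm:3braids} means it suffices to prove $V(a_1, b_1, \ldots, a_n, b_n) \leq \det(K)$, since Corollary \ref{cor:AdamsVolumeBound} already gives $\vol[K] < 2\pi\log V(a_1, b_1, \ldots, a_n, b_n)$. The plan is to compute $\det(K)$ directly from the checkerboard graph $G$ of the closed braid and to recognize it as a cyclic version of the series--parallel chains that compute 2-bridge determinants. Concretely, $G$ is a ``necklace'': the twist regions with $a_i$ and $b_i$ crossings contribute, alternately, bundles of parallel edges and strings of edges in series, arranged around a single cycle precisely because the braid is closed. The highlighted edge in Figure \ref{fig:RationalBraidCheckerboard} is the edge that, when deleted, breaks the necklace into the open chain computing a 2-bridge determinant, so Lemma \ref{lem:ContractionDeletion} is the natural first tool.

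First I would make this graph description precise and apply the Matrix Tree Theorem (Theorem \ref{thm:MatrixTree}). Because $G$ is a cyclic nearest-neighbor chain, the relevant Laplacian cofactor is evaluated by a transfer matrix $N$ equal to the product of the $2\times 2$ continuant matrices $\left(\begin{smallmatrix} c & 1 \\ 1 & 0\end{smallmatrix}\right)$, one for each twist parameter $c$ in the cyclic sequence $(a_1, b_1, \ldots, a_n, b_n)$. Using
$$\prod_{j=1}^{m}\begin{pmatrix} c_j & 1 \\ 1 & 0\end{pmatrix} = \begin{pmatrix} \det R(c_1, \ldots, c_m) & \det R(c_1, \ldots, c_{m-1}) \\ \det R(c_2, \ldots, c_m) & \det R(c_2, \ldots, c_{m-1}) \end{pmatrix}$$
(which is Theorem \ref{thm:DetRecurrence} in matrix form) together with the fact that each factor has determinant $-1$, the transfer-matrix count of spanning trees on a cycle gives $\det(K) = \operatorname{tr}(N) - 2$, that is
$$\det(K) = \det R(a_1, b_1, \ldots, a_n, b_n) + \det R(b_1, a_2, b_2, \ldots, a_n) - 2.$$
The second summand is the continuant of the ``middle'' sequence obtained by dropping $a_1$ and $b_n$, and this closed form is exactly what the single deletion--contraction step of the figure produces once it is iterated around the necklace.

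With the formula in hand the conclusion is short. When $K$ is hyperbolic we may assume $n \geq 2$ (for $n=1$ the closure is a connected sum of torus links and is not hyperbolic), so the middle sequence $b_1, a_2, \ldots, a_n$ has at least two entries and its continuant satisfies $\det R(b_1, a_2, \ldots, a_n) \geq 2$; hence $\det(K) \geq \det R(a_1, b_1, \ldots, a_n, b_n)$. The full sequence has even length $2n \geq 4$ and, by hypothesis, is not identically $1$, so $R(a_1, b_1, \ldots, a_n, b_n)$ is none of the excluded links $R(1,1,1,1), R(1,a_2,1), R(1,1), R(1)$ of Corollary \ref{cor:detandV}. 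That corollary then yields $\det R(a_1, b_1, \ldots, a_n, b_n) \geq V(a_1, b_1, \ldots, a_n, b_n)$, and chaining the two inequalities gives $\det(K) \geq V(a_1, b_1, \ldots, a_n, b_n)$, as required.

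The hard part will be the determinant computation itself: one must pin down the checkerboard graph of the closed alternating 3-braid exactly and justify the transfer-matrix evaluation via Theorem \ref{thm:MatrixTree}, in particular the additive constant $-2$ and the identification of both diagonal continuants as genuine 2-bridge determinants. The figure supplies one deletion--contraction step, but converting it into the closed form requires carefully tracking how contracting the closure edge re-closes the chain. The remaining ingredients---that the middle continuant is at least $2$, and that the hypothesis excludes exactly the failing case $R(1,1,1,1)$ at length four (which is why the all-ones braids are deferred to Lemma \ref{lem:all1})---are routine once the formula is established.
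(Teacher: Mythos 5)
Your proposal is correct, and it takes a genuinely different route from the paper's. The paper never evaluates $\det(K)$ exactly: after conjugating the braid word so that, without loss of generality, $b_n \geq 2$, it iterates the deletion--contraction step of Figure \ref{fig:RationalBraidCheckerboard} only far enough to get the one-sided bound $\det(K) \geq b_n \det R(a_1,b_1,\hdots,b_{n-1},a_n)$ (equation \eqref{eqn:braidineq}), applies Corollary \ref{cor:detandV} to that odd-length sequence, and then uses $b_n \geq (b_n+2)/2$ to recover the factor of $V$ lost by dropping $b_n$. The price is a case analysis: the sequences $(1,b_1,1,b_2)$ and $(1,b_1)$ escape the corollary and are handled by direct computation and by non-hyperbolicity respectively, and when every $b_i=1$ the whole argument must be rerun on the dual checkerboard graph. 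Your exact formula $\det(K) = \det R(a_1,b_1,\hdots,a_n,b_n) + \det R(b_1,a_2,\hdots,b_{n-1},a_n) - 2$ makes all of that unnecessary: it is invariant under cyclic rotation of the parameters, so no conjugation is needed; the middle continuant is at least $2$ whenever $n \geq 2$, so $\det(K) \geq \det R(a_1,\hdots,b_n)$; and among even-length sequences of length at least four the only exclusion in Corollary \ref{cor:detandV} is the all-ones sequence, which is exactly what the hypothesis of the lemma forbids. In short, the paper buys a shorter determinant argument at the cost of three sporadic cases and a dual-graph pass; you buy a uniform endgame at the cost of proving an exact spanning-tree count.

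That exact count is the one real debt in your write-up, and it is genuine but repayable: the formula is true. Writing $[c_1,\hdots,c_m]$ for $\det R(c_1,\hdots,c_m)$, sanity checks: for $n=1$ your formula gives $a_1b_1$, the determinant of the connected sum $T(2,a_1)\,\#\,T(2,b_1)$; for $B(1,b_1,1,b_2)$ it gives $b_1b_2+2b_1+2b_2$, matching the paper's hand computation; and in the all-ones case it gives $-2+\left((3+\sqrt{5})/2\right)^n+\left((3-\sqrt{5})/2\right)^n$, matching Lemma \ref{lem:all1}. Moreover, you need not fight the Matrix Tree Theorem for the constant $-2$ at all: your formula follows by induction on $n$ from the paper's own recursion \eqref{eqn:calculatebraid}, namely $\det(K) = b_n[a_1,b_1,\hdots,a_n] + \det B(a_1+a_n,b_1,\hdots,a_{n-1},b_{n-1})$, using only the recursion of Theorem \ref{thm:DetRecurrence}, its mirror image $[c_1,\hdots,c_m] = c_1[c_2,\hdots,c_m]+[c_3,\hdots,c_m]$, and the resulting additivity $[a_1+a_n,b_1,\hdots,b_{n-1}] = [a_1,b_1,\hdots,b_{n-1}] + a_n[b_1,\hdots,b_{n-1}]$. (Alternatively, the periodic-tridiagonal trace identity already invoked in Lemma \ref{lem:all1} applies to the hub-deleted Laplacian of your necklace graph, after one checks that each twist-region transfer block is conjugate, by a fixed unipotent matrix, to the product of the two corresponding continuant matrices.) Once the formula is in place, the rest of your argument --- $n=1$ non-hyperbolic, middle continuant at least $2$, Corollary \ref{cor:detandV} applied to the full sequence --- is airtight.
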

\begin{proof}
Suppose $b_i \neq 1$. Let $\sigma_1$ and $\sigma_2$ be generators of the 3-braid, where $\sigma_i$ denotes a positive half-twist of the $i$th and $(i+1)$st strands. Then $K$ is the closure of $\sigma_1^{a_1}\sigma_2^{-b_1}\hdots\sigma_1^{a_n} \sigma_2^{-b_n}$. Let $\alpha = (\sigma_2^{b_n}\sigma_1^{-a_n}\sigma_2^{b_{n-1}}\sigma_1^{a_{n-1}}\hdots \sigma_2^{b_{i+1}}\sigma_1^{-a_{i+1}})$. Then 
\begin{equation}\label{eqn:braidconj}
\alpha^{-1}(\sigma_1^{a_1}\sigma_2^{-b_1}\hdots\sigma_1^{a_n} \sigma_2^{-b_n})\alpha = \sigma_1^{a_{i-1}}\sigma_2^{b_{i-1}}\hdots \sigma_1^{a_n}\sigma_2^{-b_n} \sigma_1^{a_1}\sigma_2^{-b_1}\hdots \sigma_1^{a_i}\sigma_1^{-b_i}
\end{equation} 
The braid closure of the right hand side of (\ref{eqn:braidconj}) corresponds to the three-braid $$K' = B(a_{i-1}, b_{i-1}, \hdots, a_n, b_n, a_1,b_1, \hdots, a_{i}, b_{i})$$
so $K$ is equivalent to $K'$. Therefore we may assume in this case that $b_n \neq 1$. We will now reduce the problem to the case of 2-bridge links, and the result will follow from Theorem \ref{thm:rationaldetvol}. Observe that one of the checkerboard graphs for $B(a_1, b_1, \hdots, a_n, b_n)$ and $R(a_1, b_1, \hdots, b_{n-1}, a_n)$ will be of the form shown in Figure \ref{fig:RationalBraidCheckerboard}. Apply Lemma \ref{lem:ContractionDeletion} by contracting and deleting the highlighted right edge in the far right of Figure \ref{fig:RationalBraidCheckerboard}. The result of deleting the edge yields a graph with the same number of spanning trees as one of the checkerboard graphs of $R(a_1,b_1, \hdots, b_{n-1}, a_n)$. If $b_n \geq 2$ then contraction of the highlighted edge is the checkerboard graph of $B(a_1, b_1, \hdots, a_n, b_n - 1)$. Further, contracting the highlighted edge of a checkerboard graph of $B(a_1,b_1,\hdots, a_n, 1)$ yields a graph isomorphic to a checkerboard graph of $B(a_1 + a_n, b_1, a_2, b_2, \hdots, a_{n-1}, b_{n-1})$. Therefore we see inductively that 
\begin{align}
\notag\det(B(a_1, b_1, \hdots, a_n, b_n)) & = \det(R(a_1, b_1, \hdots, b_{n-1}, a_n)) + \det(B(a_1, b_1, \hdots, a_n, b_n - 1))\\
\notag & = 2\det(R(a_1, b_1, \hdots, b_{n-1}, a_n)) + \det(B(a_1, b_1, \hdots, a_n, b_n - 2))\\
\notag &\,\,\,\vdots \\
\notag & = (b_n-1)\det(R(a_1, b_1, \hdots, b_{n-1}, a_n)) + \det(B(a_1, b_1, \hdots, a_n, 1))\\
& = b_n\det(R(a_1, b_1, \hdots, b_{n-1}, a_n)) + \det(B(a_1+a_n, b_1, \hdots, a_{n-1}, b_{n - 1}))\label{eqn:calculatebraid}\\
& \geq b_n\det(R(a_1, b_1, \hdots, b_{n-1}, a_n))\label{eqn:braidineq}
\end{align} 
Suppose that $\{a_1,b_1,\hdots, a_n, b_n\} \neq \{1, b_1, 1, b_2\}$ or $\{1, b_1\}$. Then since $b_n \geq 2$, we have $b_n \geq (b_n + 2)/2$ and we can apply Corollary \ref{cor:detandV}:
\begin{align}
\notag \det(B(a_1, b_1, \hdots, a_n, b_n)) & \geq b_n\det(R(a_1, b_1, \hdots, b_{n-1}, a_n)) && \text{by \ref{eqn:braidineq}}\\
& \geq b_n V(a_1, b_1, \hdots, b_{n-1}, a_n) && \text{by Corollary \ref{cor:detandV}} \label{eqn:usecor}\\
\notag & = b_n \dfrac{\prod_{i = 1}^n(a_i + 2) \prod_{i = 1}^{n-1}(b_i + 2)}{2^{2n - 1}}\\
\notag & \geq \dfrac{\prod_{i = 1}^n(a_i + 2) \prod_{i = 1}^{n}(b_i + 2)}{2^{2n}}\\
\notag & = V(a_1, b_1, \hdots, a_n, b_n)
\end{align}
If $\{a_1,b_1,\hdots, a_n, b_n\} = \{1, b_1, 1, b_2\}$ or $\{1, b_1\}$ then we cannot apply Corollary \ref{cor:detandV} to obtain (\ref{eqn:usecor}). If $\{a_1,b_1,\hdots, a_n, b_n\}  = \{1, b_1\}$ then $B(1, b_1)$ is seen to be the $(2, b_1)$-torus link which is not hyperbolic. If $\{a_1,b_1,\hdots, a_n, b_n\} = \{1, b_1, 1, b_2\}$ then using (\ref{eqn:calculatebraid}) we see that 
\begin{align}
\det(B(1, b_1, 1, b_2)) & = b_2 \det(R(1,b_1, 1)) + \det(B(2, b_1))\\
& = b_2(b_1 + 2) + 2b_1
\end{align} 
On the other hand
\begin{equation}
V(1, b_1, 1, b_2) = \frac{9}{16}(b_1b_2 + 2b_1 + 2b_2 + 4)
\end{equation}
Since $b_2 \geq 2$ we see that
\begin{align*}
\det(B(1,b_1,1,b_2)) - V(1,b_1,1,b_2) & = \frac{7}{16}(b_1 b_2 + 2 b_1 + 2b_2) - \frac{36}{16}\\
& \geq \frac{56}{16} - \frac{36}{16}\\&> 0
\end{align*}
If $b_i = 1$ for all $i$, then $a_i \neq 1$ for some $i$. Then $K$ is equivalent to the link $$B(a_{i}, b_{i}, \hdots, a_n, b_n, a_1,b_1, \hdots, a_{i-1}, b_{i-1})$$ and therefore we assume that $a_1 \neq 1$. Then we can repeat the argument above by considering the other checkerboard surface (\textit{i.e.} considering the checkerboard graph obtained from the white regions instead of the shaded regions).
\end{proof}

\begin{lemma}\label{lem:all1}
Let $K = B(1, 1, \hdots, 1, 1)$ where there are $2n$ copies of $1$. Then $\emph{vol}(K) < 2 \pi \log \det(K)$.
\end{lemma}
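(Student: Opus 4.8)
The plan is to identify the relevant checkerboard graph explicitly, compute its number of spanning trees in closed form, and then play this against the volume bound of Corollary \ref{cor:AdamsVolumeBound}. First I would examine the checkerboard coloring of the standard alternating diagram of $K = B(1,1,\dots,1)$. Reading off the shaded regions, the central region of the closed braid becomes a single vertex adjacent to $n$ peripheral regions that are arranged cyclically; concretely, the shaded checkerboard graph is the wheel graph $W_n$ on $n+1$ vertices and $2n$ edges, whose hub has degree $n$ and whose $n$ rim vertices each have degree $3$. This can be read off from the $2$-periodic structure of the diagram (one $\sigma_1$ crossing and one $\sigma_2^{-1}$ crossing per period contributing a rim edge and a spoke), and checked against the two smallest cases: $n=2$ gives a triangle with one doubled edge and $n=3$ gives $K_4$. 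Since $K$ is alternating, $\det(K)$ equals the number of spanning trees $\tau(W_n)$ of this graph.

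Next I would compute $\tau(W_n)$ via the Matrix Tree Theorem (Theorem \ref{thm:MatrixTree}), deleting the row and column of the hub. The resulting $n\times n$ minor is $3I - A(C_n)$, where $A(C_n)$ is the adjacency matrix of the $n$-cycle, so its determinant is $\prod_{j=0}^{n-1}\bigl(3 - 2\cos(2\pi j/n)\bigr)$. Writing $\phi = (1+\sqrt5)/2$ and using $3 = \phi^{2} + \phi^{-2}$, a standard evaluation of this product gives $\det(K) = \tau(W_n) = \phi^{2n} + \phi^{-2n} - 2 = (\phi^{n} - \phi^{-n})^2$.

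For the volume, Corollary \ref{cor:AdamsVolumeBound} applies with the two distinct degree-$n$ faces of the diagram (the hub region and the outer region), so that $r = s = n$, $b_n = 2$, and the remaining $2n$ faces are triangles; a direct substitution collapses the bound to $\operatorname{vol}(K) < 2\pi\log\bigl((3/2)^{2n}\bigr)$. It therefore suffices to show $(3/2)^{2n} \le (\phi^{n}-\phi^{-n})^2$, i.e. $(3/2)^{n} \le \phi^{n} - \phi^{-n}$, for $n\ge 3$. I would prove this by induction: the base case $n=3$ is a numerical check, and the inductive step uses $\phi > 3/2$ together with $\phi^{-(n+1)} < \phi^{1-n}$ to estimate $\phi^{n+1} - \phi^{-(n+1)} > \phi(\phi^{n} - \phi^{-n}) \ge \phi(3/2)^{n} > (3/2)^{n+1}$. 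Combined with the volume bound this yields $\operatorname{vol}(K) < 2\pi\log\det(K)$ for $n\ge 3$. The single remaining case $n=2$ is exactly the figure-eight knot, for which $\det(K)=5$ and $\operatorname{vol}(K)\approx 2.03 < 2\pi\log 5$, handled by direct inspection.

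The main obstacle is the first step: rigorously verifying that the shaded checkerboard graph of $B(1,\dots,1)$ is the wheel $W_n$ for every $n$, since this is the geometric input on which the clean determinant computation rests. The case $n=2$ also genuinely requires separate treatment, because the weakened bipyramid estimate $(3/2)^{2n}$ slightly exceeds $\det(K)$ there ($5.0625 > 5$), so Corollary \ref{cor:AdamsVolumeBound} alone does not suffice for the figure-eight knot.
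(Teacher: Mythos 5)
Your proposal is correct and takes essentially the same approach as the paper: the paper's Laplacian is exactly that of the wheel graph you describe, it likewise applies the Matrix Tree Theorem with the hub row and column deleted, obtains the same closed form $\det(K)=\left(\tfrac{3+\sqrt5}{2}\right)^n+\left(\tfrac{3-\sqrt5}{2}\right)^n-2$ (your $\phi^{2n}+\phi^{-2n}-2$, since $(3\pm\sqrt5)/2=\phi^{\pm2}$), and compares it with $(9/4)^n$ for $n\ge 3$. The only cosmetic differences are that the paper evaluates the minor via a transfer-matrix trace formula (citing a tridiagonal-determinant result) rather than your circulant eigenvalue product, and it handles the $n=2$ figure-eight case by citing Theorem \ref{thm:rationaldetvol} rather than a direct numerical check.
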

\begin{proof}
We will use the Matrix Tree Theorem, Theorem \ref{thm:MatrixTree}, to compute the number of spanning trees of the checkerboard graph, and hence the determinant of $K$. If $n \geq 3$, the associated checkerboard graph has Laplacian
\begin{equation}
L = \begin{bmatrix}
 n 	   & -1		 & -1 & -1 & -1 & -1 & \hdots & -1 & -1 & -1\\
-1 	   &  3 	 & -1 &  0 &  0 & 0  & \hdots &  0 &  0 & -1\\
-1 	   & -1 	 &  3 & -1 &  0 & 0  & \hdots &  0 &  0 &  0 \\
-1 	   &  0 	 & -1 &  3 & -1 & 0  & \hdots &  0 &  0 &  0\\
-1 	   &  0 	 &  0 & -1 &  3 & -1 & 		   &  0 &  0 &  0\\
\vdots & \vdots & \vdots & \vdots & \vdots & \vdots &  & \vdots & \vdots\\
-1 & 0  & 0 & 0 & 0 & 0 & \hdots &  -1 &   3 & -1\\
-1 & -1 & 0 & 0 & 0 & 0 & \hdots &   0 &  -1 &  3
\end{bmatrix}
\end{equation}
where $L$ is an $(n+1) \times (n +1)$ matrix.
Let $L'$ be the minor of $L$ obtained by eliminating the first row and first column. Then it is known (see for example \cite{MolinariTridiagonal}) that 
\begin{equation}
\det(L') = -2 + \text{tr} \left(\prod_{i = 1}^{n} \begin{bmatrix} 3 & -1 \\ 1 & 0 \end{bmatrix}\right)
\end{equation}
By diagonalizing, we can compute an explicit formula:
\begin{align*}
\det(L') & = -2 + \text{tr} \left( \mathlarger{\mathlarger{\prod}}_{i = 1}^n
\begin{bmatrix} \dfrac{3 + \sqrt{5}}{2} & \dfrac{3 - \sqrt{5}}{2} \\ \\ 1 & 1\end{bmatrix}
\begin{bmatrix} \dfrac{3 + \sqrt{5}}{2} & 0 \\ 0 & \dfrac{3 - \sqrt{5}}{2}\end{bmatrix}
\begin{bmatrix} \dfrac{3 + \sqrt{5}}{2} & \dfrac{3 - \sqrt{5}}{2} \\ \\ 1 & 1\end{bmatrix} \inv\right)\\
& = -2 + \text{tr} \left(
\begin{bmatrix} \dfrac{3 + \sqrt{5}}{2} & \dfrac{3 - \sqrt{5}}{2} \\ \\ 1 & 1\end{bmatrix}
\begin{bmatrix} \left(\dfrac{3 + \sqrt{5}}{2}\right)^n & 0 \\ 0 & \left(\dfrac{3 - \sqrt{5}}{2}\right)^n\end{bmatrix}
\begin{bmatrix} \dfrac{3 + \sqrt{5}}{2} & \dfrac{3 - \sqrt{5}}{2} \\ \\ 1 & 1\end{bmatrix} \inv\right)\\
& = -2 + \left(\dfrac{3 + \sqrt{5}}{2}\right)^n + \left(\dfrac{3 - \sqrt{5}}{2}\right)^n
\end{align*}
The volume of $K$ is bounded above by 
\begin{equation}
V(1, 1, \hdots, 1, 1) = \left( \frac{3}{2} \right)^{2n} = \left( \frac{9}{4} \right)^n 
\end{equation}
It is straightforward to check that 
\begin{equation}
-2 + \left(\dfrac{3 + \sqrt{5}}{2}\right)^n + \left(\dfrac{3 - \sqrt{5}}{2}\right)^n > \left( \frac{9}{4} \right)^n \quad \text{ if } n \geq 3
\end{equation}
If $ n = 1$, then $K$ is not hyperbolic. If $n =2$ then $K$ is the figure-eight knot, which is a 2-bridge knot and therefore satisfies Conjecture \ref{conj:det_vol} by Theorem \ref{thm:rationaldetvol}.
\end{proof}

\subsection{A Family of 4-braids}
Let $\sigma_1, \sigma_2, \sigma_3$ be the generators of the 4-braid, where $\sigma_i$ denotes a positive half-twist of the $i$th and $(i+1)$st strands. Let $W_n$ be the closure of $(\sigma_1\sigma_3\sigma_2\inv)^n$. We note that these links correspond to the weaving links $W(4,n)$ of \cite{CKP} and \cite{CKP2}. A checkerboard graph associated with $W_n$ is the maximal planar lantern graph $\mathcal{E}_{n+2}$ on $(n+2)$ vertices as shown in Figure \ref{fig:FourBraid}. Work of Modabish, Lotfi, and El Marraki \cite{MobadishPlanar} shows that 
\begin{equation}\label{eqn:PlanarDet}
\tau(\mathcal{E}_{n+2}) = \dfrac{n}{2 +2\sqrt{3}} \left[ (2 + \sqrt{3})^{n} - (2 - \sqrt{3})^{n}\right] \leq \dfrac{n+2}{2\sqrt{3}}(2+\sqrt{3})^{n}
\end{equation}
On the other hand, Corollary \ref{cor:AdamsVolumeBound} shows that
\begin{equation}\label{eqn:PlanarVol}
\vol[W_n] \leq 2 \pi \log \left( \dfrac{3^{2n}4^{n}}{2^{3n}}\right) = 2 \pi \log \left[\left(\dfrac{9}{2}\right)^{n} \right]
\end{equation}
Since $2 + \sqrt{3} < 9/2$ it follows from equations (\ref{eqn:PlanarDet}) and (\ref{eqn:PlanarVol}) that the volume bound of Corollary \ref{cor:AdamsVolumeBound} is insufficient to prove Conjecture \ref{conj:det_vol} for these links. However, one may instead use Theorem \ref{thm:AdamsUpperBound} to find that
\begin{align*}
\exp\left(\dfrac{\vol[W_n]}{2 \pi}\right)  & \leq \exp\left[ 2n\, \vol[B_3]+n\,\vol[B_4]\right]\\
& \leq (3.418677233748620053022)^n
\end{align*}
This bound may also be obtained from \cite[Theorem 1.1]{CKP2}.
On the other hand, equation (\ref{eqn:PlanarDet}) and the fact that $(2-\sqrt{3})^{n} < \frac{1}{2}(2 + \sqrt{3})^{n}$ for $ n \geq 1$ together imply that
\begin{equation}
\tau(\mathcal{E}_{n+2}) = \dfrac{n+2}{2 \sqrt{3}} \left[ (2 + \sqrt{3})^{n} - (2 - \sqrt{3})^{n}\right] \geq \dfrac{n+2}{4 \sqrt{3}}(2 +\sqrt{3})^{n}
\end{equation}
It is straightforward to show that $$(3.418677)^{n}<\dfrac{n+2}{4 \sqrt{3}}(2 +\sqrt{3})^{n} \text{ for $n \geq 4$}$$ so Conjecture \ref{conj:det_vol} holds for all $W_n$ with $n \geq 4$. Note that the case $n \leq 3$ has been verified in \cite{CKP}.

\begin{figure}
\includegraphics[scale=.75]{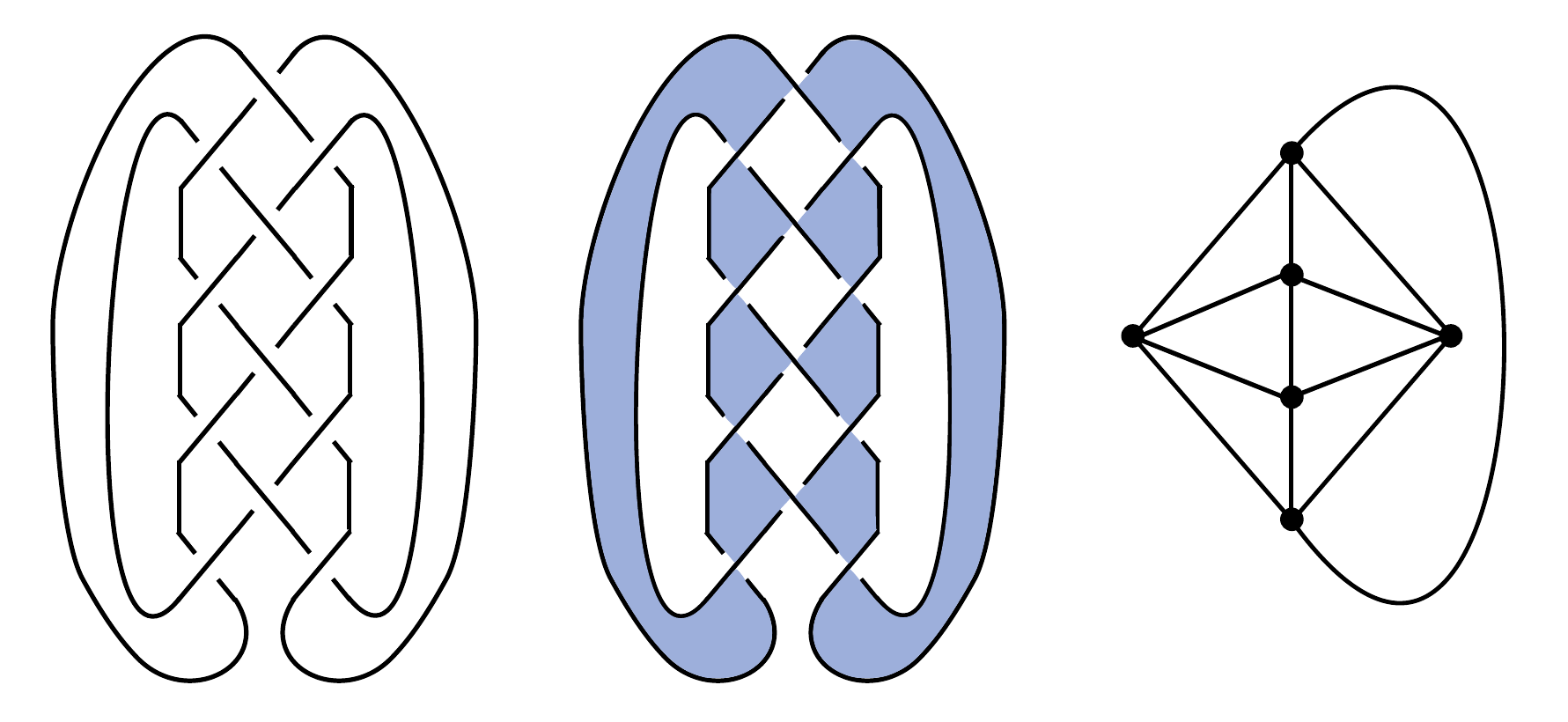}
\caption{\textsc{Left}: The link $W_n$. \textsc{Center}: The checkerboard coloring of $W_4$. \textsc{Right}: The graph $\mathcal{E}_6$ corresponding to the white checkerboard surface.}\label{fig:FourBraid}
\end{figure}

One can use this method to find many more infinite families of links for which Conjecture \ref{conj:det_vol} holds. Given a planar graph $G$, one may create an alternating link $K$ for which $G$ is the checkerboard graph of $K$. This is done by replacing each edge with a crossing and connecting ends of crossings so that each vertex is on the shaded part of the checkerboard surface. One can then calculate the volume estimates and then if the number of spanning trees of the graph is known test whether the conjecture holds. This method works for the wheel, fan, crystal, star-flower graphs of \cite{MobadishStarFlower} and \cite{MobadishPlanar} as well as the grid graphs and triangulated grid-graphs of \cite{MobadishFamilies}.

\section{Highly Twisted Knots}\label{sec:pretzel}

We consider the situation where a link has a twist region with many crossings. It is known by \cite{LackenbyBound} that the volume of an alternating link is bounded by the number of twist regions in the diagram. Therefore, increasing the number of crossings in a twist region of an alternating hyperbolic link has a bounded effect on the hyperbolic volume. On the other hand, the number of spanning trees in the checkerboard graph will increase by adding crossings to a twist region. It follows that highly twisted links must satisfy Conjecture \ref{conj:det_vol}. We quantify this in the following theorem.

\begin{theorem}\label{thm:high_twist}
Let $K$ be an alternating hyperbolic link with a reduced alternating diagram having $t$ twist regions and $c$ crossings. If 
\begin{equation}\label{eqn:twist_bound}
c \geq t + \xi^{t-1} - 2\gamma^{t-1}
\end{equation}
where $\gamma \approx 1.4253$ is as described in Theorem \ref{thm:Stoimenow} and $\xi = e^{5 v_{4}/\pi} \approx 5.0296$, then $\emph{vol}(K) < 2 \pi \log(\det(K))$.
\end{theorem}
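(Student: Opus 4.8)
The strategy is to combine the two ingredients already assembled in this section: the volume upper bound of Lackenby--Agol--Thurston (Theorem \ref{thm:LackenbyBound}) and Stoimenow's spanning-tree lower bound on the determinant (Theorem \ref{thm:Stoimenow}). The inequality $\vol(K) < 2\pi\log\det(K)$ we wish to prove has a volume on the left bounded above purely in terms of the twist number $t$, and a determinant on the right bounded below in terms of $t$. The tension is that as we add crossings to a twist region the volume bound from Theorem \ref{thm:LackenbyBound} does not improve, whereas the determinant grows. So the plan is to write the determinant as the Stoimenow bound plus a correction that accounts for the ``extra'' crossings beyond the minimal one-per-twist-region configuration, and to feed that correction through the hypothesis (\ref{eqn:twist_bound}).

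First I would bound the volume. By Theorem \ref{thm:LackenbyBound}, $\vol(K) < 10 v_4 (t-1)$, so $\exp(\vol(K)/2\pi) < \exp(5v_4(t-1)/\pi) = \xi^{t-1}$, using exactly the definition $\xi = e^{5v_4/\pi}$. Thus it suffices to show $\xi^{t-1} \le \det(K)$, since then $\vol(K) < 2\pi \log(\xi^{t-1}) \le 2\pi\log\det(K)$. Next I would lower-bound the determinant. Theorem \ref{thm:Stoimenow} gives $\det(K) \ge 2\gamma^{t-1}$ for the ``base'' configuration, but this is not by itself enough to beat $\xi^{t-1}$ once $\xi > \gamma$. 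The idea is to recover the missing growth from the crossings. Each of the $t$ twist regions contributes at least one crossing, and there are $c - t$ ``extra'' crossings distributed among the twist regions. I would argue that adding a crossing to a twist region increases the number of spanning trees of the checkerboard graph by at least a fixed additive amount (the parallel edges in a twist region behave like a path/series–parallel structure, so via Lemma \ref{lem:ContractionDeletion} each additional parallel edge adds at least one spanning tree for each tree of the reduced graph); combined with the Stoimenow base bound this should yield something of the shape $\det(K) \ge 2\gamma^{t-1} + (c-t)$. The hypothesis (\ref{eqn:twist_bound}) rearranges exactly to $c - t + 2\gamma^{t-1} \ge \xi^{t-1}$, which then gives $\det(K) \ge \xi^{t-1}$ and closes the argument.

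The main obstacle is establishing the additive lower bound $\det(K) \ge 2\gamma^{t-1} + (c - t)$, i.e.\ quantifying precisely how much each extra crossing increases the spanning-tree count beyond Stoimenow's bound. Stoimenow's theorem is stated for a fixed twist number and does not track the crossing count, so I cannot simply cite it; I need to interleave his estimate with a deletion--contraction analysis (Lemma \ref{lem:ContractionDeletion}) on the twist regions. Concretely, if $G$ is the checkerboard graph and $e$ is an edge in a twist region carrying multiple parallel edges, then $\tau(G) = \tau(G - e) + \tau(G/e)$, where $G-e$ still has all other twist regions intact and $G/e$ has at least one spanning tree; iterating this over the $c-t$ surplus crossings should produce the additive term while preserving the factor $2\gamma^{t-1}$ from the reduced one-crossing-per-region diagram. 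Care is needed to ensure the reduced diagram to which Stoimenow's bound is applied is still twist-reduced and alternating, and to handle the degenerate low-$t$ cases (for instance $t=1$, where the bound must be checked directly). Once this additive estimate is in place, the final inequality is just the algebraic rearrangement of (\ref{eqn:twist_bound}).
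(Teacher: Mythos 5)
Your proposal is correct and follows essentially the same route as the paper: the paper likewise converts Theorem \ref{thm:LackenbyBound} into the bound $\exp(\mathrm{vol}(K)/2\pi) < \xi^{t-1}$, and proves exactly your additive estimate $\det(K) \geq 2\gamma^{t-1} + c - t$ by iterating Lemma \ref{lem:ContractionDeletion} over the $c-t$ surplus crossings (using that the contracted graph is connected, so $\tau(G/e) \geq 1$) until each twist region has a single crossing, at which point Theorem \ref{thm:Stoimenow} supplies the factor $2\gamma^{t-1}$. The step you identified as the main obstacle is thus resolved precisely as you sketched, and the hypothesis (\ref{eqn:twist_bound}) closes the argument by the same algebraic rearrangement.
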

\begin{proof}
Let $a_1, \hdots, a_t$ be the crossing numbers of the $t$ twist regions of $K$. Let $G(x_1,\hdots, x_t)$ be the checkerboard graph of the link obtained by placing $x_i$ crossings in the $i$th twist region of $K$. Since a checkerboard graph has the same number of spanning trees as its  dual, we may assume that the first twist region of $K$ corresponds to a path on $a_1$ vertices in $G(a_1,\hdots, a_t)$. By Lemma \ref{lem:ContractionDeletion} we then obtain
\begin{align*}
\tau(G(a_1, a_2, \hdots, a_t)) & = \tau(G(a_1-1,a_2\hdots, a_t)) + \tau(G(0, a_2, \hdots, a_t))\\
& = \tau(G(1,a_2,\hdots, a_t)) + (a_1 - 1) \tau(G(0,\hdots, a_t))\\
& \geq \tau(G(1,a_2,\hdots, a_t)) + (a_1 - 1)\\
& \geq \tau(G(1,1,\hdots, 1)) + \sum_{i = 1}^t(a_i - 1)\\
& = \tau(G(1, 1, \hdots, 1)) + c - t
\end{align*}
Theorem \ref{thm:Stoimenow} then implies that
\begin{equation}
\det(K) = \tau(G(a_1,\hdots, a_t)) \geq 2 \gamma^{t-1} + c - t
\end{equation}
By Theorem \ref{thm:LackenbyBound} we know that $\vol[K] \leq 10 v_{4}(t-1)$. It is then straightforward to check that if (\ref{eqn:twist_bound}) holds then 
\begin{equation}
\vol[K] < 10v_{4}(t-1) \leq 2 \pi \log(2 \gamma^{t-1} + c - t) \leq 2 \pi \log(\det(K))
\end{equation} 
\end{proof}

\begin{corollary}\label{cor:Montesinos}
Let $K$ be an alternating hyperbolic Montesinos link with $t$ twist regions and $c$ crossings. If
\begin{equation}
c \geq t + \zeta^t - 2 \gamma^{t-1}
\end{equation}
where $\zeta = e^{v_{8}/\pi} \approx 3.2099$ then $\emph{vol}(K) \leq 2 \pi \log(\det(K))$.
\end{corollary}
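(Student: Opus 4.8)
The plan is to repeat the argument of Theorem~\ref{thm:high_twist} verbatim for the lower bound on the determinant, and then swap in the sharper volume estimate available for Montesinos links. First I would invoke the contraction--deletion computation from the proof of Theorem~\ref{thm:high_twist}: applying Lemma~\ref{lem:ContractionDeletion} to the path of crossings coming from each twist region, together with Theorem~\ref{thm:Stoimenow} applied to the all-ones graph $G(1,\ldots,1)$, one obtains the identical lower bound
\begin{equation*}
\det(K) = \tau(G(a_1,\ldots,a_t)) \geq 2\gamma^{t-1} + c - t,
\end{equation*}
where $a_1,\ldots,a_t$ are the crossing numbers of the twist regions so that $\sum_i (a_i - 1) = c - t$. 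Nothing in this step distinguishes the Lackenby--Agol--Thurston setting from the Montesinos one; it uses only that $K$ is alternating with a twist-reduced diagram, so it transfers directly.

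The only genuine change is in the volume bound. In place of Theorem~\ref{thm:LackenbyBound} I would apply Theorem~\ref{thm:montesinos}, which for a hyperbolic Montesinos link gives the tighter estimate $\vol[K] < 2 v_8 t$. The payoff is an exact algebraic match with the definition $\zeta = e^{v_8/\pi}$: since $\log(\zeta) = v_8/\pi$, we have
\begin{equation*}
2 v_8 t = 2\pi \cdot t \cdot \frac{v_8}{\pi} = 2\pi \log(\zeta^t).
\end{equation*}
This is precisely why the base $\zeta$ appears in the hypothesis, in exact parallel to how $\xi = e^{5 v_4/\pi}$ arose from the $10 v_4 (t-1)$ bound in Theorem~\ref{thm:high_twist}.

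Finally I would chain the inequalities. The hypothesis $c \geq t + \zeta^t - 2\gamma^{t-1}$ rearranges to $\zeta^t \leq 2\gamma^{t-1} + c - t \leq \det(K)$, so monotonicity of the logarithm yields
\begin{equation*}
\vol[K] < 2 v_8 t = 2\pi \log(\zeta^t) \leq 2\pi \log(\det(K)),
\end{equation*}
which is the desired conclusion. I expect no serious obstacle, as the argument is structurally identical to that of Theorem~\ref{thm:high_twist}. The one point deserving a sentence of care is that the $t$ entering the determinant bound (the twist regions of a twist-reduced diagram, as required by Theorem~\ref{thm:Stoimenow}) must coincide with the $t$ used in Theorem~\ref{thm:montesinos}; I would therefore note that for an alternating Montesinos link both quantities count the twist regions of the given reduced alternating diagram, so the two bounds are applied consistently.
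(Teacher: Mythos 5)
Your proposal is correct and is essentially identical to the paper's proof: the paper proves Corollary \ref{cor:Montesinos} by noting that the argument of Theorem \ref{thm:high_twist} carries over verbatim, with the volume bound $10v_4(t-1)$ of Theorem \ref{thm:LackenbyBound} replaced by the bound $2v_8 t$ of Theorem \ref{thm:montesinos}, which is exactly what you do. Your explicit unpacking of the algebra ($2v_8 t = 2\pi\log(\zeta^t)$) and the consistency check on $t$ are sound elaborations of the same argument.
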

\begin{proof}
The proof is the same as for Theorem \ref{thm:high_twist}, except we replace the upper bound on volume with the bound $2 v_{8}t$ of Theorem \ref{thm:montesinos}.
\end{proof}

\subsection{Application to Pretzel Knots}
We give an application of Theorem \ref{thm:high_twist} and Corollary \ref{cor:Montesinos} to alternating pretzel links. We begin by calculating the determinant of an alternating pretzel knot.
\begin{proposition}\label{prop:PretzelDet}
 Let $P(a_1, a_2, \hdots, a_n)$ be the alternating pretzel knot having $a_1, a_2, \hdots, a_n$ crossings in the first, second, and so on to the $n$th twist region. Then
 \begin{equation}
 \det(P(a_1,a_2,\hdots, a_n)) = \sum_{i = 1}^n\prod_{j \neq i} a_j
 \end{equation}
\end{proposition}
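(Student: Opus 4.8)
The plan is to compute the determinant via the Matrix Tree Theorem (Theorem \ref{thm:MatrixTree}) applied to a checkerboard graph of $P(a_1,\hdots,a_n)$, though I expect the cleanest route is actually the contraction-deletion recurrence of Lemma \ref{lem:ContractionDeletion}. First I would fix a checkerboard coloring of the standard alternating pretzel diagram so that one of the two checkerboard graphs $G$ has an especially simple shape: the pretzel $P(a_1,\hdots,a_n)$ has $n$ twist regions arranged in parallel between two ``poles,'' so the natural checkerboard graph is a \emph{generalized theta graph}, i.e.\ two distinguished vertices $u,v$ joined by $n$ internally disjoint paths, where the $i$th path consists of $a_i$ edges in series (equivalently $a_i-1$ internal degree-two vertices). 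I would verify this combinatorial picture carefully against the diagram, since choosing the correct one of the two checkerboard surfaces is what makes the graph a theta graph rather than its dual (a multigraph on two vertices with $a_i$-fold edges).

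The core computation is then a spanning-tree count on this theta graph. I would argue that a spanning tree must connect $u$ to $v$, so it is obtained by selecting, for exactly one index $i$, a single edge to delete from the $i$th path (breaking its cycle structure appropriately) while keeping all other paths intact would overcount; more precisely, the right way to see it is that a spanning tree of the theta graph is determined by choosing which ``rungs'' to cut so that exactly one independent cycle is broken per path and the whole remains connected and acyclic. To make this rigorous and avoid a fiddly direct bijection, I would instead induct using Lemma \ref{lem:ContractionDeletion} on an edge $e$ lying in the $i$th path: contracting $e$ shortens that path (replacing $a_i$ by $a_i-1$), while deleting $e$ disconnects that path entirely, which removes the $i$th parallel class and leaves a theta graph on the remaining $n-1$ paths with the endpoints identified appropriately. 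Tracking how $\sum_i \prod_{j\neq i} a_j$ transforms under these two operations should reproduce exactly the contraction-deletion identity, closing the induction.

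Alternatively, and perhaps more transparently, I would use series-parallel reduction for spanning-tree counts: a path of $a_i$ edges in series between $u$ and $v$ contributes like a single edge of ``conductance'' $1/a_i$, and $n$ such paths in parallel give total weighted tree-count $\big(\prod_i a_i\big)\sum_i (1/a_i) = \sum_i \prod_{j\neq i} a_j$, using the standard fact that for two vertices joined by parallel series-paths the number of spanning trees is $\prod_i a_i \cdot \sum_i a_i^{-1}$. I expect the main obstacle to be \emph{the bookkeeping in identifying the correct checkerboard graph}: one must confirm that the alternating pretzel diagram yields the theta graph (not its planar dual) under the chosen shading, handle the parity/orientation conventions of the $a_i$ crossings, and check the small or degenerate cases (some $a_i=1$, or $n$ small) so that the series-parallel formula applies without spurious multi-edges or loops. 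Once the graph is correctly identified, the determinant computation itself is the short and routine part.
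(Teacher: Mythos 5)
Your main route --- realizing the checkerboard graph of the alternating pretzel as the generalized theta graph (two poles joined by $n$ internally disjoint paths of $a_1,\hdots,a_n$ edges) and inducting with the contraction-deletion relation of Lemma \ref{lem:ContractionDeletion} --- is exactly the paper's proof, which deletes/contracts an edge in the $n$th twist region to get $\det(P(a_1,\hdots,a_n)) = a_1\cdots a_{n-1} + a_n\det(P(a_1,\hdots,a_{n-1}))$ (the $a_n=1$ contraction giving the wedge of cycles with $a_1\cdots a_{n-1}$ trees) and iterates. Your non-load-bearing asides have minor slips --- the other checkerboard graph is a cycle of $n$ vertices with consecutive vertices joined by $a_i$-fold multi-edges, not a two-vertex multigraph, and since a planar graph and its dual have equal spanning-tree counts the choice of shading is actually immaterial --- while your series-parallel count $\bigl(\prod_i a_i\bigr)\sum_i a_i^{-1}$ is a correct alternative shortcut.
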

\begin{proof}
We will use Lemma \ref{lem:ContractionDeletion}. An example checkerboard graph for $P(a_1, a_2, \hdots, a_n)$ is given on the left of Figure \ref{fig:Pretzel}. Deleting an edge in the $n$th twist region produces a graph with the same number of spanning trees as the checkerboard graph of $P(a_1, \hdots, a_{n-1})$. For example one may delete the red edge in Figure \ref{fig:Pretzel}. On the other hand, if $a_n \geq 2$, then contracting that edge results in the checkerboard graph of $P(a_1, a_2 \hdots, a_n -1)$. If $a_n = 1$ then the resulting graph is the join of $(n-1)$ cycles. For example, contracting the blue edge in Figure \ref{fig:Pretzel} produces the graph on the right of Figure \ref{fig:Pretzel}) which has $a_1a_2\hdots a_{n-1}$ spanning trees. Therefore by Lemma \ref{lem:ContractionDeletion} we see that \begin{equation}
\det(P(a_1,\hdots, a_n)) = a_1\hdots a_{n-1} + a_n \det(P(a_1, a_2,\hdots, a_{n-1}))
\end{equation} 
Applying the above method to $P(a_1, \hdots, a_{n-1})$ \textit{et cetera} we obtain the desired result.
\end{proof}
\begin{figure}
\includegraphics[scale=.75]{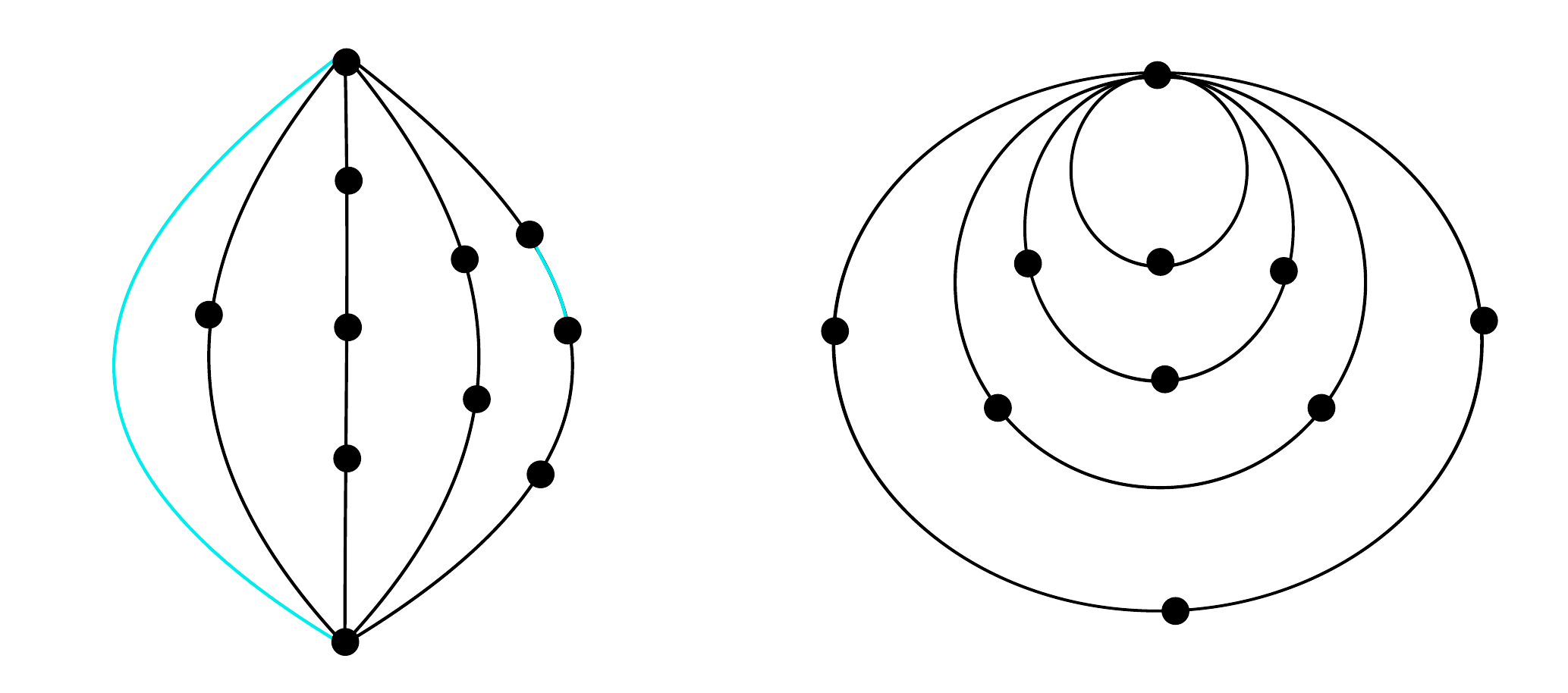}
\caption{\textsc{Left}: The checkerboard graph of the Pretzel knot $P(1,2,4,3,4)$. Deletion of the red edge produces a graph with the same number of spanning trees as the checkerboard graph of $P(1,2,4,3)$. Contraction of the cyan edge yields the graph on the right. \textsc{Right}: Result of contracting the cyan edge of the graph on the left.}
\label{fig:Pretzel}
\end{figure}

Given a fixed number $t$ of twist regions and a pretzel link $K$ with $t$ twist regions, Corollary \ref{cor:Montesinos} can be used to say that if $K$ has more than t +$ \zeta^t - 2 \gamma^{t-1} $ crossings in any twist region, then it satisfies Conjecture \ref{conj:det_vol}. Therefore for a given $t$, there are only finitely many links which may fail to satisfy Conjecture \ref{conj:det_vol}. We may enumerate these links, use Theorem \ref{thm:AdamsUpperBound} to compute an upper bound on volume, and check that this upper bound is less than the determinant. Using this method, we have shown with computer assistance that Conjecture \ref{conj:det_vol} holds for all alternating pretzel links with no more than 13 twist regions.
\newpage

\bibliographystyle{abbrv}

\bibliography{DetVolConjecture}

\end{document}